\theoremstyle{plain}
\newtheorem{theorem}{Theorem}[section]
\newtheorem{lemma}[theorem]{Lemma}
\theoremstyle{definition}
\newtheorem{remark}[theorem]{Remark}
\newcommand{\fmf}{Lip_{FM}(X)}
\numberwithin{equation}{section}
\newcommand{\n}{\mathbb{N}}
\renewcommand {\>}{\right\rangle}  
\newcommand{\norma}[1]{\left\|#1\right\|}
\title{
\bf{A Useful Version of the Central Limit Theorem for a~General Class of Markov Chains}
}
\author[1]{Dawid Czapla}
\author[1]{Katarzyna Horbacz}
\author[1]{Hanna Wojew\'odka}
\affil[1]{\small Institute of Mathematics, University of Silesia in Katowice, Bankowa 14, 40-007 Katowice, Poland}
\date{}
\begin{document}
\maketitle

\begin{abstract}
In the paper we propose certain conditions, relatively easy to verify, which ensure the central limit theorem for some general class of Markov chains. To justify the usefulness of our criterion, we further verify it for a~particular discrete-time Markov dynamical system. From the application point of view, the examined system
provides a useful tool in analysing the stochastic dynamics of gene expression in prokaryotes.
\end{abstract}



{\small \noindent
{\bf Keywords:} Markov chain, random dynamical system, central limit theorem, asymptotic coupling
}\\
{\bf 2010 AMS Subject Classification:} 60J05, 37A30, 37A25\\

\section*{Introduction}\label{sec:intro}
The central limit theorem (CLT) is, beside the law of large numbers, the most fundamental limit theorem for random processes. It refers to the convergence in distribution of the standardized sample average towards the normal distribution.  
Although limit theorems for positive Harris recurrent Markov chains are already well-investigated (see \cite{mt}), they are still the subject of research for a~wider class of Markov chains. 
An interesting version of the CLT for stationary ergodic Markov chains was provided by M. Maxwell and M. Woodroofe in \cite{mw}. Recently, the CLT has also been established for various non-stationary Markov processes, e.g. the processes a~priori possessing unique stationary distributions, but not necessarily starting from them (see \cite{ghsz, klo, kw}).

Here, we also establish a version of the CLT for a quite general class of Markov chains. Our aim, however, is to provide a tool which may prove to be useful in studying, in terms of limit theorems, certain stochastic models developed in natural sciences (especially, molecular bilology). Therefore we do not require any form of continuous dependence of the given Markov chain on the initial conditions (as is necessary to assume for the results in \cite{ghsz,kw} to hold). We do not even directly require the exponential mixing property (see e.g. \cite{hairer} for the precise formulation). 
Instead, we propose a set of relatively easily verifiable conditions, which yield both the exponential ergodicity in the context of weak convergence of measures (according to  \cite[Theorem 2.1]{ks}), as well as the CLT. 

The class of Markov chains for which we establish the CLT may be shortly specified by the existence of an appropriate Markovian coupling whose transition function can be decomposed into two parts, one of which is contractive and dominant in some sense. The construction of such a~coupling is adapted from \cite{dawid2, ks}, which, in turn, is inspired by the prominent results of M. Hairer \cite{hairer}. Within this framework, we provide an exponential estimate of the mean distance between two coupled copies of the examined chain. This result, stated in Lemma \ref{cor:g_useful}, slightly generalizes the exponential mixing property obtained by R.~Kapica and M.~\'Sl\k{e}czka while proving \hbox{\cite[Theorem 2.1]{ks}}. The precise proof of this lemma is interesting itself, as well as it also clarifies the reasoning presented in \cite{ks}. In fact, Lemma \ref{cor:g_useful} plays a key role in this paper, since it allows us to carry out a~brief proof of the CLT (Theorem~\ref{thm:CTG}). 
The proof also appeals to the results of M. Maxwell and M.~Woodroof \cite{mw}, which make it more concise and less technical than the classical proofs, based directly on martingale methods. The proofs in \cite{hhsw} and \cite{horbacz_clt} are carried out in the same spirit, although only for some specific cases. 
It is also worth mentioning here that conditions proposed in this paper (namely hypothesis formulated in Sections \ref{sec:SGP} and \ref{sec:CLT}) yield the Donsker invariance principle for the CLT (cf. \cite{bill}), provided that the Markov chain is stationary (which in this case means that its initial distribution is exactly its unique invariant distribution, whose existence follows from the assumptions).

To justify the usefulness of the given criterion, we further apply it to a~particular discrete-time Markov dynamical system (cf.  \cite{asia,dawid}), for which the conditions proposed in \cite{ghsz} cannot be directly verified. The examined system has interesting biological interpretations. First of all, it can be viewed as the chain given by the post-jump locations of some piecewise-deterministic Markov process, which occurs in a simple model of  gene expression (cf. \cite{dawid,mtky}). More pecisely, it describes the amounts of protein molecues synthesized from a bacterial gene in random bursts. On the other hand, a special case of the above-mentioned model provides a mathematical framework for modelling the spatial distribution of the compounds involved in the gene autoregulation, i.e. the produced protein and its phosphorylated and dimerised form (for details, see \cite{hhs}). The latter also indicates the importance of considering a non-locally compact space as the state space in the abstract framework. 


The paper is organised as follows. In Section \ref{sec:1} we introduce notation and definitions used throughout the paper. First of all, we relate to the theory of Markov chains, discussed more widely e.g. in \cite{mt,revuz}. Further, we present some general facts concerning measures on a~path space, and introduce the notion of Markovian coupling. At the end of Section \ref{sec:1}, we recall two results on the properties of hitting and absorption times. In Section \ref{sec:SGP} we quote the criterion on the exponential ergodicity in the context of weak convergence of probability measures, established in \cite[Theorem 2.1]{ks}. Moreover, we also provide its stronger version, namely Lemma \ref{cor:g_useful}. The proof of the CLT (Theorem \ref{thm:CTG}) is given in Section \ref{sec:CLT}. Finally, in Section \ref{sec:ex} we establish the CLT for the Markov chain given by the post-jump locations of some piecewise deterministic Markov process considered in \cite{dawid}.

\section{Prelimenaries}\label{sec:1}

Within this section we establish notation and give some basic definitions used throughout the paper. 
We also recall some well-known facts, useful for further proofs. 

\subsection{Markov Operators}

First of all, let $\mathbb{R}$ denote the set of real numbers, $\mathbb{R}_+=[0,\infty)$, $\mathbb{N}=\{1,2,\ldots\}$ and $\mathbb{N}_0=\mathbb{N}\cup\{0\}$. 
We consider a complete separable metric space $(X,\varrho)$, i.e. a Polish space. By $\mathcal{B}_X$ we denote the $\sigma$-field of all Borel subsets of $X$. For any set $A$ and any point $x$, we use the symbols $\mathbbm{l}_A$ and $\delta_x$ to denote the indicator function of $A$ and the Dirac measure at $x$, respectively. Let us write $B_b(X)$ for the space of all bounded Borel measurable functions $f:X\to\mathbb{R}$, endowed with the supremum norm $\|f\|_{\infty}=\sup_{x\in X}|f(x)|$. Further, let $C_b(X)$ and $Lip_b(X)$ denote the subspaces of $B_b(X)$  consisting of all continuous and all Lipschitz-continuous functions, respectively. 
At some point we shall also refer to the space $\bar{B}_b(X)$ of functions $f:X\to\mathbb{R}$ which are Borel measurable and bounded below. Such a space contains, in particular, the so-called Lyapunov functions, which play an important role in our further analysis. By a \emph{Lyapunov function} we mean a map $V:X\to[0,\infty)$ which is continuous, bounded on bounded sets, and, in the case of unbounded $X$, satisfies $\lim_{\varrho(x,\bar{x})\to\infty}V(x)=\infty$ for some fixed point $\bar{x}\in X$. Moreover, for simplicity, we use the notation $\langle f,\mu\rangle:=\int_Xf(x)\,\mu(dx)$ for any $f\in\bar{B}_b(X)$ and any signed measure ($\sigma$-additive set function) $\mu:\mathcal{B}_X\to\mathbb{R}$.

The cone of all non-negative Borel measures on $X$ is denoted by $\mathcal{M}(X)$. We distinguish the following subsets of $\mathcal{M}(X)$:
\begin{align*}
&\mathcal{M}_{fin}(X)=\left\{\mu\in \mathcal{M}(X):\;\mu(X)<\infty\right\},\qquad\mathcal{M}_1(X)=\left\{\mu\in \mathcal{M}(X):\; \mu(X)=1\right\},\\
&\mathcal{M}_{1,s}^V(X)=\left\{\mu\in \mathcal{M}_1(X):\; \left\langle V^s,\mu\right\rangle<\infty\right\}
\end{align*}
for some $s>0$ and some Lyapunov function $V:X\to[0,\infty)$. 
For any $\mu\in \mathcal{M}_{fin}(X)$, we write supp$\,\mu$ for the support of $\mu$, i.e. $\text{supp}\,\mu=\{x\in X:\;\mu\left(B(x,r)\right)>0\;\text{for any}\; r>0\}$, where $B(x,r)=\{y\in X:\;\varrho(x,y)< r\}$. 
The set $\mathcal{M}_1(X)$ will be considered with the topology induced by the so-called Fortet-Mourier 
distance (see e.g. \cite{l_frac}), defined as follows:
$$d_{FM}(\mu_1,\mu_2)=\sup\left\{\left|\left\langle f,\mu_1-\mu_2\right\rangle\right|:\;f\in \fmf\right\}\quad\text{for}\quad \mu_1,\mu_2\in \mathcal{M}_1(X),$$
where $$\fmf=\{f\in Lip_b(X):\;\norma{f}_{BL}\leq 1\},\;\;\;\;\;\;\norma{f}_{BL}=\max(|f|_{Lip},\,\norma{f}_{\infty}),$$ 
and $|f|_{Lip}$ stands for the minimal Lipschitz constant of $f$.
Since $(X,\varrho)$ is assumed to be Polish, the convergence in $d_{FM}$ is equivalent to the weak convergence of probability measures. This assumption additionally implies completeness of the space $\left(\mathcal{M}_1(X),d_{FM}\right)$ (for proofs, see e.g. \cite{dudley}). 

A mapping $\Pi:X\times \mathcal{B}_X\to [0,1]$ is called a \emph{(sub)stochastic kernel} (or a transition \emph{(sub)probability function}) if $\Pi(\cdot,A):X\to[0,1]$ is a Borel measurable map for any fixed $A\in \mathcal{B}_X$, and $\Pi(x,\cdot):\mathcal{B}_X\to[0,1]$ is a (sub)probability Borel measure for any fixed $x\in X$. Every such kernel naturally induces two operators: $P:\mathcal{M}_{fin}(X)\to \mathcal{M}_{fin}(X)$ and \hbox{$U:{B}_b(X)\to {B}_b(X)$} given by
\begin{align}\label{def:markov_op}
&P\mu(A)=\int_X\Pi(x,A)\,\mu(dx)\;\;\text {for} \;\;\mu\in \mathcal{M}_{fin}(X),\; A\in \mathcal{B}_X,\\
\label{def:dual_op}
&Uf(x)=\int_Xf(y)\,\Pi(x,dy) \;\;\text{for}\;\; f\in {B}_b(X),\;x\in X.
\end{align}
If the kernel $\Pi$ is stochastic, then $P$ given by (\ref{def:markov_op}) is called a \emph{regular Markov operator}, whilst $U$ given by (\ref{def:dual_op}) is said to be its \emph{dual operator}. The duality relationship takes the form
\begin{align*}
\langle f,P\mu \rangle=\langle Uf,\mu\rangle &\quad\text{for}\quad f\in{B}_b(X),\; \mu\in \mathcal{M}_{fin}(X).
\end{align*}
Moreover, a regular Markov operator $P$ is said to be \emph{Feller} if $Uf\in C_b(X)$ for every $f\in C_b(X)$. Let us indicate that the above-defined mappings are related with one another in the following way \hbox{(cf. \cite{l_frac,z}):}
\begin{align*}
\Pi(x,A)=P\delta_x(A)=U\mathbbm{l}_A(x)&\quad\text{for}\quad x\in X,\; A\in \mathcal{B}_X.
\end{align*}
Further, note that any dual operator $U$ can be extended, in the usual way, to a linear operator on the space $\bar{B}_b(X)$. Hence, in particular, we are allowed to write $UV$, whenever $V$ is a~Lyapunov function. We should keep in mind, however, that it can happen that $UV(x)=\infty$ for some $x\in X$, as long as no additional assumptions are imposed. 

For any (sub)stochastic kernel $\Pi$, we can define the $n$-th step kernels $\Pi^n$, $n\in\mathbb{N}_0$, by setting
\begin{align*}
\begin{aligned}
&\Pi^0(x,A)=\delta_x(A),\quad \Pi^1(x,A)=\Pi(x,A)
,\\
&\Pi^n(x,A)=\int_X\Pi(y,A)\,\Pi^{n-1}(x,dy)
\quad \text{for}\quad 
x\in X,\; A\in\mathcal{B}_X,\;n\in\mathbb{N}.
\end{aligned}
\end{align*}

Let $P$ be an arbitrary regular Markov operator. 
We call $\mu_*\in \mathcal{M}_{fin}(X)$ an~\emph{invariant measure} of $P$ if $P\mu_*=\mu_*$. The operator $P$ is said to be \emph{exponentially ergodic in} $d_{FM}$ whenever it has a~unique invariant measure $\mu_*\in \mathcal{M}_1(X)$ and there exists $q\in (0,1)$ such that
\begin{align*}
d_{FM}(P^n\mu,\mu_*)\leq q^n C(\mu)\;\;\;\mbox{for any}\;\;\;\mu\in\mathcal{M}_{1,1}^V(X),\;n\in\n,
\end{align*}
where $C(\mu)$ is a constant which depends only on the initial distribution $\mu$.

Suppose that $(\phi_n)_{n\in\n_0}$ is a time-homogeneous $X$-valued Markov chain, defined on a~probability space $(\Omega, \mathcal{A},\mathbb{P})$. We say that $(\phi_n)_{n\in\n_0}$ has the one-step transition law determined by a stochastic kernel $\Pi$, if
\begin{align} \label{trans_n} 
\Pi(x,A)=\mathbb{P}(\phi_{n+1}\in A|\phi_n=x)\quad\text{for}\quad x\in X,\;A\in \mathcal{B}_X,\;n\in\mathbb{N}_0.
\end{align}

\subsection{Measures on a Path Space}\label{sec:measures}
Every stochastic kernel $\Pi^n$ determines the 
probabilities $\mathbb{P}_{x}^n$ of the form
\begin{align}\label{rule_1_dim}
\mathbb{P}_x^n(\cdot)=\Pi^n(x,\cdot)\quad\text{for}\quad x\in X,\;n\in\mathbb{N}_0.
\end{align}  
We further introduce the higher-dimensional distributions $\mathbb{P}^{1,\ldots,n}_x$ on $X^n$, for $x\in X$, defined (inductively on $n\in\mathbb{N}$) as follows:  
provided that the probability measures $\mathbb{P}^{1,\ldots,k}_x$ on $X^{k}$ have already been defined for every  $k<n$, the distribution $\mathbb{P}^{1,\ldots,n}_x$ is given as the unique measure which satisfies
\begin{align}\label{rule_n_dim}
\mathbb{P}^{1,\ldots,n}_x(A\times B)=\int_{A}\mathbb{P}^1_{z_{n-1}}(B)\,\mathbb{P}^{1,\ldots,n-1}_x(dz_1\times \ldots\times dz_{n-1}),\quad A\in \mathcal{B}_{X^{n-1}},\; B\in \mathcal{B}_X.
\end{align}

Let us now recall the following theorem (cf. \cite{mt,revuz}):

\begin{theorem}\label{thm:P_mu}
Consider $\Omega:=X^{\mathbb{N}_0}$ with the product topology, and let $(\phi_n)_{n\in\mathbb{N}_0}$ denote the sequence of mappings from $\Omega$ to $X$ given by $\phi_n(\omega)=x_n$ for $\omega=(x_0,x_1,\ldots)\in \Omega$. Then, for any $\mu\in \mathcal{M}_{1}(X)$, and any stochastic kernel $\Pi:X\times \mathcal{B}_X\to\left[0,1\right]$, there exists a probability measure $\mathbb{P}_{\mu}\in\mathcal{M}_1(\Omega)$, such that, for every $n\in\mathbb{N}$,
\begin{align}\label{eq:P_mu}
\mathbb{P}_{\mu}(A_0\times\ldots\times A_n\times X \times X \ldots)=\int_{A_0}\mathbb{P}^{1,\ldots,n}_x(A_1\times\ldots\times A_n)\,\mu(dx),\;\;A_0,\ldots,A_n\in\mathcal{B}_X,
\end{align}
where  $\mathbb{P}_x^{1,\ldots,n}$ are defined by \eqref{rule_1_dim}, \eqref{rule_n_dim}. In particular, $(\phi_n)_{n\in \mathbb{N}_0}$ is then a time-homogeneous Markov chain on the probability space $(\Omega,\mathcal{B}_{\Omega},\mathbb{P}_{\mu})$ with transition probability function $\Pi$ and initial distribution $\mu$. Clearly, 
$\mathbb{P}_{\mu}(B)$ is then the probabiliy of the event $\left\{(\phi_n)_{n\in\mathbb{N}_0}\in B\right\}$ for $B\in\mathcal{B}_{\Omega}$. 
\end{theorem}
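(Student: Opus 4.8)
This is the classical existence theorem for a Markov chain on the canonical path space, so the natural routes are the Ionescu--Tulcea theorem (valid for arbitrary measurable state spaces, built directly from the sequence of kernels) or the Kolmogorov extension theorem (available here since $X$ is Polish). I would follow the second route, since the finite-dimensional laws $\mathbb{P}^{1,\ldots,n}_x$ are already in place via \eqref{rule_1_dim}--\eqref{rule_n_dim}.

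First I would check that the finite-dimensional objects are well posed. Fixing $x\in X$ and arguing by induction on $n$, the integrand $z_{n-1}\mapsto\mathbb{P}^1_{z_{n-1}}(B)=\Pi(z_{n-1},B)$ in \eqref{rule_n_dim} is Borel measurable by the definition of a stochastic kernel, so the right-hand side of \eqref{rule_n_dim} defines a set function on the measurable rectangles of $X^n$ that is countably additive in each coordinate; a standard monotone-class ($\pi$--$\lambda$) argument promotes it to a Borel probability measure $\mathbb{P}^{1,\ldots,n}_x$ on $X^n$, using that $\mathcal{B}_{X^n}=\bigotimes_{i=1}^n\mathcal{B}_X$ because $X$ is separable, and that each $\mathbb{P}^1_z$ has total mass $1$. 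The same monotone-class argument, carried out jointly in $x$, yields the measurability of $x\mapsto\mathbb{P}^{1,\ldots,n}_x(C)$ for every $C\in\mathcal{B}_{X^n}$; this is precisely what makes the outer integral in \eqref{eq:P_mu} meaningful.

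Next I would assemble the projective family. For $n\in\mathbb{N}$ set $\mathbb{Q}_n(A_0\times\cdots\times A_n)=\int_{A_0}\mathbb{P}^{1,\ldots,n}_x(A_1\times\cdots\times A_n)\,\mu(dx)$ on $X^{\{0,\ldots,n\}}$, together with $\mathbb{Q}_0=\mu$. Taking $A_n=X$ in \eqref{rule_n_dim} and using that $\mathbb{P}^1_{z_{n-1}}$ is a probability measure gives $\mathbb{P}^{1,\ldots,n}_x(A_1\times\cdots\times A_{n-1}\times X)=\mathbb{P}^{1,\ldots,n-1}_x(A_1\times\cdots\times A_{n-1})$, hence the marginal of $\mathbb{Q}_n$ on the coordinates $0,\ldots,n-1$ is $\mathbb{Q}_{n-1}$; so $(\mathbb{Q}_n)_{n\in\mathbb{N}_0}$ is consistent. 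Since $X$ is Polish, the Kolmogorov extension theorem provides a (unique) probability measure $\mathbb{P}_\mu$ on $(\Omega,\mathcal{B}_\Omega)$, $\Omega=X^{\mathbb{N}_0}$, with marginal $\mathbb{Q}_n$ on $X^{\{0,\ldots,n\}}$ for every $n$, which is exactly \eqref{eq:P_mu}.

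Finally I would verify the Markov structure. Putting $A_1=\cdots=A_n=X$ in \eqref{eq:P_mu} gives $\mathbb{P}_\mu(\phi_0\in A_0)=\mu(A_0)$, so $\mu$ is the initial law. For the transition law, fix $n$ and a bounded Borel $g:X\to\mathbb{R}$; unwinding \eqref{eq:P_mu} and \eqref{rule_n_dim} one gets, for every cylinder $\{\phi_0\in A_0,\ldots,\phi_n\in A_n\}$, the identity $\int g(\phi_{n+1})\,\mathbbm{l}_{A_0}(\phi_0)\cdots\mathbbm{l}_{A_n}(\phi_n)\,d\mathbb{P}_\mu=\int (Ug)(\phi_n)\,\mathbbm{l}_{A_0}(\phi_0)\cdots\mathbbm{l}_{A_n}(\phi_n)\,d\mathbb{P}_\mu$, with $U$ the dual operator from \eqref{def:dual_op}. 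A monotone-class argument extends this from product indicators to all bounded $\sigma(\phi_0,\ldots,\phi_n)$-measurable functions, which is the assertion $\mathbb{E}_{\mathbb{P}_\mu}[g(\phi_{n+1})\mid\phi_0,\ldots,\phi_n]=Ug(\phi_n)$, i.e. $\mathbb{P}_\mu(\phi_{n+1}\in A\mid\phi_0,\ldots,\phi_n)=\Pi(\phi_n,A)$; since $\Pi$ is independent of $n$, the chain is time-homogeneous. The main technical obstacle is the bookkeeping in the two monotone-class arguments — the joint measurability of $x\mapsto\mathbb{P}^{1,\ldots,n}_x(C)$ and the rectangle-level consistency — together with the appeal to Polishness that licenses Kolmogorov's theorem; everything else is a direct unwinding of \eqref{rule_1_dim}--\eqref{eq:P_mu}.
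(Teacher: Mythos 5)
Your proposal is correct, but note that the paper does not prove Theorem \ref{thm:P_mu} at all: it is quoted as a classical fact with a pointer to the monographs of Meyn--Tweedie and Revuz, where the canonical chain is usually built via the Ionescu--Tulcea theorem rather than via Kolmogorov's extension theorem. Your route --- first checking that \eqref{rule_n_dim} yields a consistent family of finite-dimensional laws, then invoking Kolmogorov's theorem on the Polish space $X$ (where $\mathcal{B}_{X^{\mathbb{N}_0}}$ coincides with the product $\sigma$-field by separability), and finally verifying $\mathbb{E}_{\mathbb{P}_\mu}[g(\phi_{n+1})\mid\phi_0,\ldots,\phi_n]=Ug(\phi_n)$ by a monotone-class argument --- is a perfectly valid alternative; it trades the purely measure-theoretic Ionescu--Tulcea machinery (which needs no topology and builds $\mathbb{P}_\mu$ directly from the kernels, making consistency automatic) for an appeal to the topological regularity of $X$, which the paper's standing Polishness assumption supplies for free. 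One small refinement: when you construct $\mathbb{P}^{1,\ldots,n}_x$, separate countable additivity in each coordinate plus a $\pi$--$\lambda$ argument gives you uniqueness of the extension, not its existence; the cleanest fix is to define $\mathbb{P}^{1,\ldots,n}_x(C)$ for all $C\in\mathcal{B}_{X^n}$ directly as the iterated integral $\int_{X^{n-1}}\int_X \mathbbm{l}_C(z_1,\ldots,z_n)\,\Pi(z_{n-1},dz_n)\,\mathbb{P}^{1,\ldots,n-1}_x(dz_1\times\ldots\times dz_{n-1})$, whose countable additivity follows from monotone convergence, and then use the $\pi$--$\lambda$ theorem only to identify it with the right-hand side of \eqref{rule_n_dim} on rectangles; the same iterated-integral representation also gives the Borel measurability of $x\mapsto\mathbb{P}^{1,\ldots,n}_x(C)$ that your outer integral in \eqref{eq:P_mu} requires.
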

{The Markov chain defined accordingly to Theorem \ref{thm:P_mu} will be further called a canonical Markov chain.} By convention, we will write 
$\mathbb{P}_x(B)=\mathbb{P}_{\mu}(B|\phi_0=x)$ for $B\in\mathcal{B}_{X^{\mathbb{N}_0}}$, and we will denote the expected values corresponding to $\mathbb{P}_x, \mathbb{P}_{\mu}\in \mathcal{M}_1(X^{\mathbb{N}_0})$ by $\mathbb{E}_{x}$, $\mathbb{E}_{\mu}$, respectively.

\subsection{A Coupling Method}\label{sec:coupling}

Let us now introduce a piece of notation related to a coupling method, often used to evaluate the rate of convergence to stationary distributions (more elaboration on coupling techniques can be found in \cite{hairer, hms,lind}). 

A~time-homogeneus Markov chain $(\phi^{(1)}_n,\phi^{(2)}_n)_{n\in\mathbb{N}_0}$ evolving on $X^2$ (endowed with the product topology) is said to be a \emph{Markovian coupling} of some stochastic kernel $\Pi$ whenever its transition law $C:X^2\times  \mathcal{B}_{X^2}\to\left[0,1\right]$ satisfies
$$
C(x,y,A\times X)= \Pi(x,A)\quad\text{and}\quad  C(x,y,X\times A)= \Pi(y,A)\quad\text{for any}\quad x,y\in X,\;A\in \mathcal{B}_X.
$$
By convention, the kernel $C$ itself is often called a coupling of $\Pi$, too.

It will be crucial in the analysis which follows that, for any transition probability function $\Pi$ and any substochastic kernel $Q:X^2\times\mathcal{B}_{X^2}\to [0,1]$ satisfying 
\begin{align}\label{def:substoch}
Q(x,y,A\times X)\leq \Pi(x,A)\quad\text{and}\quad
Q(x,y,X\times A)\leq\Pi(y,A)
\quad\text{for}\quad x,y\in X,\;A\in\mathcal{B}_X,
\end{align}
there exists a substochastic kernel $R:X^2\times \mathcal{B}_{X^2}\to[0,1]$ such that $C=Q+R$ is a~Markovian coupling of $\Pi$. Indeed, note that we can define $R$ by setting
\begin{align*}
R(x,y,A\times B)=\frac{\left(\Pi(x,A)-Q(x,y,A\times X)\right)\left(\Pi(y,B)-Q(x,y,X\times B)\right)}{\left(1-Q\left(x,y,X^2\right)\right)}\quad\text{for}\quad A,B\in\mathcal{B}_X,
\end{align*}
when $Q(x,y,X^2)<1$, and $R(x,y,A\times B)=0$ otherwise. Applying rules \hbox{(\ref{rule_1_dim})-(\ref{rule_n_dim})} to \linebreak{$C:X^2\times\mathcal{B}_{X^2}\to[0,1]$} and appealing to Theorem~\ref{thm:P_mu}, we can consider the canonical Markov chain $(\phi^{(1)}_n,\phi^{(2)}_n)_{n\in\mathbb{N}_0}$ with transition law $C$ and an arbitrarily fixed initial distribution $\alpha\in\mathcal{M}_1(X^2)$. We assume that the chain is defined on $((X^2)^{\mathbb{N}_0},\mathcal{B}_{(X^2)^{\mathbb{N}_0}},\mathbb{C}_{\alpha})$, where $\mathbb{C}_{\alpha}\in \mathcal{M}_1((X^2)^{\mathbb{N}_0})$ satisfies the appriopriate condition corresponding to (\ref{eq:P_mu}). 

Let us now consider the augmented space $\widehat{X^2}=X^2\times\{0,1\}$, splitted into $\widehat{X^2}_Q=X^2\times\{1\}$ and $\widehat{X^2}_R=X^2\times\{0\}$, as well as the stochastic kernel $\widehat{C}:\widehat{X^2}\times\mathcal{B}_{\widehat{X^2}}\to[0,1]$ determined by 
\begin{align}\label{def:C_hat}
\widehat{C}\left(x,y,\theta,A\times\{1\}\right)=Q(x,y,A)\quad\text{and}\quad 
\widehat{C}\left(x,y,\theta,A\times \{0\}\right)=R(x,y,A)
\end{align}
for $(x,y,\theta)\in \widehat{X^2}$ and $A\in\mathcal{B}_{X^2}$. 
{Let $\widehat{\alpha}\in\mathcal{M}_1(\widehat{X^2})$ be such that 
$\widehat{\alpha}(A\times\{0,1\})=\alpha (A)$ for any $A\in\mathcal{B}_{X^2}$. 
Then Theorem \ref{thm:P_mu} guarantees the existence of the canonical Markov chain $(\phi^{(1)}_n, \phi^{(2)}_n, \theta_n)_{n\in\mathbb{N}_0}$ with transition law $\widehat{C}$ and initial distribution $\widehat{\alpha}$, which is further referred to as an \emph{augmented coupling}. The chain is defined on $((\widehat{X^2})^{\mathbb{N}_0},\mathcal{B}_{(\widehat{X^2})^{\mathbb{N}_0}},\widehat{\mathbb{C}}_{\widehat{\alpha}})$, where $\widehat{\mathbb{C}}_{\widehat{\alpha}}$ is an appropriate probability measure on  $\mathcal{B}_{(\widehat{X^2})^{\mathbb{N}_0}}$.} 
Note that 
\begin{align}\label{coupling_marginals}
\widehat{\mathbb{C}}_{x,y,\theta}\left(\left(\phi^{(1)}_n,\phi^{(2)}_n\right)\in A\right)=\mathbb{C}^n_{x,y}(A)\;\;\; \text{for any}\;\;\; A\in\mathcal{B}_{X^2}\;\;\;\text{and any}\;\;\;(x,y,\theta)\in \widehat{X^2}.
\end{align}
By convention, we will further write $\mathbb{C}_{x,y}=\mathbb{C}_{\alpha}(\cdot|(\phi^{(1)}_0,\phi^{(2)}_0)=(x,y))$ for any $(x,y)\in X^2$ and $\widehat{\mathbb{C}}_{x,y,\theta}=\widehat{\mathbb{C}}_{\widehat{\alpha}}(\cdot|(\phi^{(1)}_0,\phi^{(2)}_0,\theta_0)=(x,y,\theta))$ for every $(x,y,\theta)\in\widehat{X^2}$. The expected values corresponding to the measures $\mathbb{C}_{x,y}\in \mathcal{M}_1((X^2)^{\mathbb{N}_0})$ and $\widehat{\mathbb{C}}_{x,y,\theta}\in \mathcal{M}_1((\widehat{X^2})^{\mathbb{N}_0})$ are denoted by $\mathbb{E}_{x,y}$ and $\widehat{\mathbb{E}}_{x,y,\theta}$, respectively.

In the research literature one can find the notion of a coupling time $\tau_{\text{couple}}$, which is the random moment at which both copies of the Markov chain, governed by $\Pi$, reach the same state for the first time, i.e. $\tau_{\text{couple}}=\min\{n\in\mathbb{N}_0: \phi^{(1)}_n=\phi^{(2)}_n\}$. If $\tau_{\text{couple}}<\infty$, then the so-called successful coupling, making chains stay together all the time, can be constructed (see \cite{lind}, \cite{mt}). Otherwise, one has to apply an asymptotic coupling, which is the case here. M. Hairer is the one who proposed (in \cite{hairer}) an effective method to couple asymptotically the whole trajectories of Markov chains which cannot actually meet at some finite time (cf. the example discussed in \cite{hms}, concerning some Markov chains on infinite dimensional spaces whose kernels, when starting from different initial conditions, induce mutually singular measures). Such an approach is extensively applied in this paper, especially in the proof of Lemma~\ref{cor:g_useful}.

\subsection{Auxiliary Results on Hitting and Absorption Times}

For any $A\in\mathcal{B}_X$, we define 
\begin{align}\label{def:tau}
\rho_A
=\inf\{n\in\mathbb{N}:\;\phi_n\in A\}\quad\text{and}\quad
\tau_A
=\inf\{n\in\mathbb{N}:\;\phi_k\in A\;\text{for all}\;{k\geq n}\},
\end{align}
which describe the first hitting time on $A$ and the time of absorption by $A$, respectively. At some point we will also make use of the variables 
\begin{align}\label{def:rho^m}
\rho_A^m
&=\inf\{n\geq m:\;\phi_n\in A\},\quad m\in\mathbb{N}.
\end{align}
Let us now quote two useful results proven in \cite{ks}.

\begin{lemma}[\text{\cite[Lemma 2.1]{ks}}]\label{lem:ks21}
Let $(\phi_n)_{n\in\mathbb{N}_0}$ be a time-homogeneous Markov chain evolving on $X$.  Assume that, for some bounded set $K\in \mathcal{B}_X$, there exist $\Lambda\in(0,1)$ and $c_{\Lambda}\in\mathbb{R}$ such that 
\begin{align}\label{eq:lem21i}
{\mathbb{E}}_{x}\left(\Lambda^{-\rho_K}\right)\leq c_{\Lambda}(1+V(x))\quad\text{for}\quad x\in X,
\end{align}
where $V:X\to[0,\infty)$ is a~Lyapunov function. 
Moreover, suppose that, for some $B\in\mathcal{B}_X$, there exist $\varkappa\in(0,1)$ and $c_{\varkappa}\in\mathbb{R}$ such that
\begin{align}\label{eq:lem21ii}
\mathbb{E}_x\left(\mathbbm{1}_{\{\rho_{X\backslash B}<\infty\}}\varkappa^{\rho_{X\backslash B}}\right)\leq c_{\varkappa}
\end{align}
and
\begin{align}\label{eq:lem21iii}
{
\inf_{x\in K}{\mathbb{P}}_{x}\left(\left\{\phi_n\in B\;\;\text{for all}\;\;n\in\mathbb{N}\right\}\right)>0.
}
\end{align}
Then there exist constants $\zeta\in(0,1)$ and $c_{\zeta}\in\mathbb{R}$ such that
{$$
{
\mathbb{E}_x\left(\zeta^{-\tau_B}\right)\leq c_{\zeta}(1+V(x))\quad\text{for}\quad x\in X.}
$$}
\end{lemma}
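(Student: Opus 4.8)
The plan is to combine the two hypotheses \eqref{eq:lem21i}, \eqref{eq:lem21ii} with the uniform lower bound \eqref{eq:lem21iii} by splitting according to successive returns of the chain to the small set $K$. First I would fix, using \eqref{eq:lem21iii}, a constant $p:=\inf_{x\in K}\mathbb{P}_x(\{\phi_n\in B\text{ for all }n\in\mathbb{N}\})>0$; so at every visit to $K$ the chain has probability at least $p$ of remaining in $B$ forever thereafter, independently (by the strong Markov property) of the past. Consider the sequence of consecutive hitting times of $K$, namely $\sigma_1=\rho_K$ and $\sigma_{j+1}=\rho_K^{\sigma_j+1}$. At each $\sigma_j$, conditionally on the trajectory up to $\sigma_j$, the event ``stay in $B$ from $\sigma_j$ on'' occurs with probability $\ge p$; if it occurs, then $\tau_B\le\sigma_j$. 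Hence, writing $N$ for the (random) index of the first successful trial, $N$ is stochastically dominated by a geometric random variable with parameter $p$, and on $\{N=j\}$ we have $\tau_B\le\sigma_j+$ (a bounded correction coming from the $k<n$ terms in the definition of $\tau_B$, which I would absorb by enlarging constants slightly or by noting $\tau_B\le\rho_{X\setminus B}$-type bounds via \eqref{eq:lem21ii}).

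The core estimate is then to control $\mathbb{E}_x(\zeta^{-\sigma_j})$ for each $j$, and to sum the resulting series against the geometric tail of $N$. By the strong Markov property and \eqref{eq:lem21i}, each ``leg'' $\sigma_{j+1}-\sigma_j$ contributes a uniformly bounded factor: more precisely, for $\Lambda$ as in \eqref{eq:lem21i} one has $\mathbb{E}_x(\Lambda^{-\sigma_1})\le c_\Lambda(1+V(x))$, and $\mathbb{E}\big(\Lambda^{-(\sigma_{j+1}-\sigma_j)}\,\big|\,\mathcal{F}_{\sigma_j}\big)\le c_\Lambda(1+V(\phi_{\sigma_j}))\le c_\Lambda(1+\sup_{y\in K}V(y))=:c_\Lambda'$, since $\phi_{\sigma_j}\in K$ and $V$ is bounded on the bounded set $K$. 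Iterating gives $\mathbb{E}_x(\Lambda^{-\sigma_j})\le c_\Lambda(1+V(x))\,(c_\Lambda')^{j-1}$. Now choose $\zeta\in(0,1)$ with $\zeta>\Lambda$ close enough to $1$ that $\zeta^{-1}<\Lambda^{-1}$ still leaves room: actually I want $\mathbb{E}_x(\zeta^{-\tau_B})=\sum_j\mathbb{E}_x(\mathbbm{1}_{\{N=j\}}\zeta^{-\tau_B})\le\sum_j\mathbb{P}(N\ge j)^{?}\cdots$ — the cleaner route is Hölder: pick $r>1$ so that $\zeta^{-r}\le\Lambda^{-1}$, apply Hölder's inequality to $\mathbb{E}_x(\mathbbm{1}_{\{N\ge j\}}\zeta^{-\sigma_j})\le (\mathbb{E}_x\zeta^{-r\sigma_j})^{1/r}\,\mathbb{P}(N\ge j)^{1/r'}\le (c_\Lambda(1+V(x)))^{1/r}(c_\Lambda')^{(j-1)/r}(1-p)^{(j-1)/r'}$, and the geometric decay $(1-p)^{1/r'}$ of the last factor can be made to dominate the growth $(c_\Lambda')^{1/r}$ by taking $r$ large enough (equivalently $\zeta$ close to $1$); summing over $j$ yields $\mathbb{E}_x(\zeta^{-\tau_B})\le c_\zeta(1+V(x))^{1/r}\le c_\zeta(1+V(x))$.

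The main obstacle I anticipate is the bookkeeping around the definition of $\tau_B$ as an \emph{absorption} time (``$\phi_k\in B$ for all $k\ge n$'') rather than a hitting time: on the successful trial the chain is in $B$ from $\sigma_N$ onward, so $\tau_B\le\sigma_N$, but I must make sure the ``failed'' excursions between the $\sigma_j$ do not leave $B$ in a way that forces $\tau_B$ to be larger than $\sigma_N$ — it does not, because $\tau_B$ only cares about the eventual tail, so $\tau_B\le\sigma_N$ holds deterministically on $\{N<\infty\}$, and $N<\infty$ a.s. by the geometric domination. A secondary technical point is justifying the conditional bound $\mathbb{E}(\Lambda^{-(\sigma_{j+1}-\sigma_j)}\mid\mathcal{F}_{\sigma_j})\le c_\Lambda(1+V(\phi_{\sigma_j}))$: this is exactly \eqref{eq:lem21i} applied at the (strong Markov) restart point $\phi_{\sigma_j}\in K$, noting that $\sigma_{j+1}-\sigma_j=\rho_K\circ\vartheta_{\sigma_j}$. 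Condition \eqref{eq:lem21ii} is used, if at all, only to pass from ``stays in $B$ eventually'' to a quantitative bound on the first time from which it stays — but in the scheme above \eqref{eq:lem21iii} already supplies the clean event ``$\phi_n\in B$ for all $n\in\mathbb{N}$'' directly, so \eqref{eq:lem21ii} may enter only to handle a short initial segment before the first visit to $K$, which I would fold into the constant $c_\zeta$.
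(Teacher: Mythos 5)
The paper itself does not prove this lemma (it is quoted from \cite{ks}), so the comparison is with the argument there; measured against it, your plan has one essential flaw: the claim that your $N$ is stochastically dominated by a geometric random variable with parameter $p$. With trials attached to \emph{consecutive} visits $\sigma_1<\sigma_2<\cdots$ to $K$, the outcome of trial $j$ (``does the chain ever leave $B$ after time $\sigma_j$?'') is not observable by time $\sigma_{j+1}$; worse, failure of trial $j$ automatically entails failure of trials $1,\dots,j-1$, so $\{N>j\}=\{\sigma_j<\infty\}\cap\{\exists\,k>\sigma_j:\phi_k\notin B\}$, and the strong Markov property applied at $\sigma_j$ yields only $\mathbb{P}_x(N>j)\le 1-p$, with no gain upon iteration (the event ``trials $1,\dots,j-1$ failed'' is not $\mathcal{F}_{\sigma_j}$-measurable). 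The bound $(1-p)^j$ is genuinely false: take $X=\{0,g,e\}$ with the discrete metric, $B=\{0,g\}$, $K=\{0,g\}$, $V\equiv 0$, and transitions $0\to 0,\ 0\to g,\ 0\to e$ with probabilities $0.99,\ 0.005,\ 0.005$, and $e\to 0$, $g\to g$. Then \eqref{eq:lem21i} holds (since $\rho_K\le 2$ a.s.), \eqref{eq:lem21ii} holds (also in its intended, non-trivial reading with $\varkappa^{-\rho_{X\setminus B}}$, for any $\varkappa\in(0.99,1)$), and \eqref{eq:lem21iii} holds with $p=1/2$; yet $\mathbb{P}_0(N>m)\ \ge\ \mathbb{P}_0\bigl(\phi_1=\cdots=\phi_m=0,\ \exists\,k>m:\ \phi_k=e\bigr)=\tfrac12\,(0.99)^m$, which is far larger than $(1-p)^m=2^{-m}$. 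Since a geometric tail for $N$ is exactly what your H\"older summation needs (the other ingredient, $\mathbb{E}_x(\Lambda^{-\sigma_j})\le c_\Lambda(1+V(x))(c_\Lambda')^{\,j-1}$, is fine), the proof breaks at this point.

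The repair is the standard interleaved construction used in \cite{ks} (and in Hairer-type coupling arguments): attempt a trial at a visit to $K$; if it fails, wait until the failure is \emph{witnessed}, i.e.\ until the first exit time from $B$ after the trial point, and only then take the next trial at the subsequent return to $K$. With trials separated in this way, each failure is measurable before the next trial starts, so \eqref{eq:lem21iii} and the strong Markov property legitimately give $\mathbb{P}(\text{first } j \text{ trials fail})\le(1-p)^j$, while $\tau_B$ is still bounded by the last trial time plus one. The price is that the inter-trial times now consist of two legs: the exit leg, whose exponential moment is precisely what \eqref{eq:lem21ii} controls, and the return leg, handled by \eqref{eq:lem21i} started from the exit point (so with the factor $1+V$ there rather than $\sup_K V$, which needs a little extra bookkeeping). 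This shows that your closing suggestion that \eqref{eq:lem21ii} ``may enter only to handle a short initial segment'' cannot be right: condition \eqref{eq:lem21ii} (which, as printed with $\varkappa^{\rho_{X\setminus B}}$ and $\varkappa\in(0,1)$, is vacuous and is surely a misprint for $\varkappa^{-\rho_{X\setminus B}}$) is exactly the hypothesis that makes the trials decouple and keeps failed excursions exponentially short; once it is used in this way, the rest of your scheme (exponential moments of the trial times combined with H\"older against the geometric tail) goes through.
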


\begin{lemma}[\text{\cite[Lemma 2.2]{ks}}]\label{lem:ks22}
Assume that $(\phi_n)_{n\in\mathbb{N}_0}$ is a time-homogeneous Markov chain, evolving on $X$, with tranisition law $\Pi$ and the corresponding dual operator $U$, defined~by~\eqref{def:dual_op}. Further, suppose that there exist a~Lyapunov function $V:X\to[0,\infty)$ and constants $a\in(0,1)$, $b\in(0,\infty)$ such that 
$$UV(x)\leq aV(x)+b\;\;\;\mbox{for all}\;\;\;x\in X.$$
Then, for  
$$J=\left\{x\in X:\; V(x)<\frac{2b}{1-a}\right\},$$
there exist $\lambda\in(0,1)$ and $c_{\lambda}\in\mathbb{R}$ such that 
\begin{align}\label{eq:lem22}
{\mathbb{E}_{x}\left(\lambda^{-\rho_J^m}\right)\leq \lambda^{-m}c_{\lambda}(1+V(x))}\quad\text{for}\quad x\in X,\; m\in\mathbb{N}.
\end{align}
\end{lemma}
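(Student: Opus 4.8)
The statement is the standard Foster--Lyapunov return-time estimate, so the plan is to combine the geometric drift condition with an elementary supermartingale argument. Write $(\mathcal{F}_n)_{n\in\mathbb{N}_0}$ for the natural filtration of $(\phi_n)$, set $R:=2b/(1-a)$ (so that $J=\{V<R\}$) and $\beta:=(1+a)/2\in(a,1)$. The first observation is a \emph{strengthened drift outside $J$}: if $x\notin J$ then $V(x)\ge R$, hence $b\le\tfrac{1-a}{2}V(x)$, and therefore $UV(x)\le aV(x)+b\le\beta V(x)$. The second preliminary step is to iterate the original inequality $UV\le aV+b$: since $UV(x)\le aV(x)+b<\infty$ for every $x$, one gets by induction (using the Markov property) that $\mathbb{E}_x V(\phi_n)\le a^nV(x)+b/(1-a)\le V(x)+b/(1-a)$ for all $n$; in particular $V(\phi_n)$ is $\mathbb{P}_x$-integrable for each $n$.

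Next, for fixed $m\in\mathbb{N}$ I would consider the process
$$Z_n:=\beta^{-(n-m)}\,V(\phi_n)\,\mathbbm{1}_{\{\rho_J^m>n\}},\qquad n\ge m.$$
On the event $\{\rho_J^m>n\}$ with $n\ge m$ one has $\phi_n\notin J$, so $UV(\phi_n)\le\beta V(\phi_n)$; moreover $\{\rho_J^m>n+1\}=\{\rho_J^m>n\}\cap\{\phi_{n+1}\notin J\}$ and $\{\rho_J^m>n\}\in\mathcal{F}_n$. Dropping the indicator via $V\ge 0$ and invoking the Markov property, these two facts give $\mathbb{E}_x(Z_{n+1}\mid\mathcal{F}_n)\le Z_n$, so $(Z_n)_{n\ge m}$ is a nonnegative supermartingale and $\mathbb{E}_x Z_n\le\mathbb{E}_x Z_m\le\mathbb{E}_x V(\phi_m)\le V(x)+b/(1-a)$. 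Since $V(\phi_n)\ge R$ on $\{\rho_J^m>n\}$ we have $Z_n\ge\beta^{-(n-m)}R\,\mathbbm{1}_{\{\rho_J^m>n\}}$, and Markov's inequality yields the geometric tail bound
$$\mathbb{P}_x(\rho_J^m>n)\le\frac{\beta^{\,n-m}}{R}\Bigl(V(x)+\frac{b}{1-a}\Bigr)\le \beta^{\,n-m}C_1\,(1+V(x)),\qquad n\ge m,$$
with $C_1:=\tfrac{1-a}{2b}+\tfrac12$ depending only on $a,b$.

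Finally, I would fix any $\lambda\in(\beta,1)$ and convert the tail estimate into the exponential-moment bound by summation by parts: from $\lambda^{-\rho_J^m}=\lambda^{-m}+(1-\lambda)\sum_{n>m}\lambda^{-n}\mathbbm{1}_{\{\rho_J^m\ge n\}}$ together with the tail bound above, a convergent geometric-series computation (convergent precisely because $\lambda>\beta$) gives $\mathbb{E}_x(\lambda^{-\rho_J^m})\le\lambda^{-m}\bigl(1+\tfrac{(1-\lambda)C_1}{\lambda-\beta}(1+V(x))\bigr)\le\lambda^{-m}c_\lambda(1+V(x))$ with $c_\lambda:=1+\tfrac{(1-\lambda)C_1}{\lambda-\beta}$, which depends on neither $m$ nor $x$. (In the degenerate case $J=X$ one has $\rho_J^m=m$ and the claim is immediate.) None of the steps is technically hard; the only points requiring care are the uniformity of $c_\lambda$ in $m$ --- which comes out automatically because $\lambda^{-m}$ factors cleanly out of the geometric sum --- and the bookkeeping for the shifted hitting time $\rho_J^m$: the event $\{\rho_J^m>n\}$ constrains only $\phi_m,\dots,\phi_n$, which is exactly why the supermartingale must be indexed from $n=m$ and why $\beta^{-(n-m)}$, rather than $\beta^{-n}$, is the correct normalisation.
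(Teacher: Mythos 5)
Your proof is correct. The paper itself gives no proof of this lemma --- it cites \cite[Lemma 2.2]{ks} and merely remarks that the slightly stronger, $m$-uniform version ``is almost the same'' --- and your Foster--Lyapunov argument (strengthened drift $UV\le\beta V$ off $J$, the supermartingale $\beta^{-(n-m)}V(\phi_n)\mathbbm{1}_{\{\rho_J^m>n\}}$, and the geometric tail-to-moment conversion) is precisely the standard route such a proof takes; in particular the factor $\lambda^{-m}$ with $c_\lambda$ independent of $m$ and $x$, which is exactly the strengthening the paper claims, comes out cleanly from your normalisation at time $m$.
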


Although Lemma \ref{lem:ks22} is given in a slightly stronger version than  \cite[Lemma 2.2]{ks}, the proof is almost the same, so we will not repeat it.

\section{The Criterion on the Exponential Ergodicity}\label{sec:SGP}
This part of the paper draws heavily on ideas used e.g. in \cite{hairer, hhsw, ks, sleczka, hw}. The key result here is Lemma \ref{cor:g_useful}, which slightly strengthens the exponential mixing property (in $d_{FM}$) obtained and used in the proof of \hbox{\cite[Theorem~2.1]{ks}}. This lemma will be an essential tool in proving the CLT given in \hbox{Section~\ref{sec:CLT}}.

Let $(X,\varrho)$ be a Polish space, and suppose that we are given a transition probability function $\Pi:X\times\mathcal{B}_X\to[0,1]$. Assuming that $P:\mathcal{M}_{fin}(X)\to\mathcal{M}_{fin}(X)$ and \hbox{$U:\bar{B}_b(X)\to \bar{B}_b(X)$} denote the operators defined by \eqref{def:markov_op} and \eqref{def:dual_op}, respectively, we will use the following hypotheses:

\begin{itemize}
\item[(B0)] \phantomsection \label{cnd:B0}
The Markov operator $P$ has the Feller property.
\item[(B1)] \phantomsection \label{cnd:B1}
There exist a Lyapunov function $V:X\to[0,\infty)$ and constants $a\in (0,1)$ and $b\in(0,\infty)$ such that
$$UV(x)\leq aV(x)+b\quad\text{for every}\quad x\in X.$$ 
\end{itemize}
Moreover, we will require that there is a substochastic kernel $Q:X^2\times\mathcal{B}_{X^2}\to[0,1]$, satisfying~ (\ref{def:substoch}), which enjoys the following properties:
\begin{itemize}
\item[(B2)] \phantomsection \label{cnd:B2}
There exist $F\subset X^2$ and $\delta\in(0,1)$ such that 
\begin{align*}
\text{supp}\,Q(x,y,\cdot)\subset F\quad\text{and}\quad
\int_{X^2}\varrho(u,v)\,Q(x,y,du\times dv)\leq \delta \varrho(x,y)\quad\text{for}\quad(x,y)\in F.
\end{align*}
\item[(B3)] \phantomsection \label{cnd:B3}
Letting $U(r)=\{(u,v)\in F:\varrho(u,v)\leq r\}$, $r>0$, we have 
$$\inf_{(x,y)\in F}Q\big(x,y,U\left(\delta\varrho(x,y)\right)\big)>0.$$
\item[(B4)] \phantomsection \label{cnd:B4}
There exist constants $\beta\in (0,1]$ and $c_{\beta}>0$ such that
$$Q\left(x,y,X^2\right)\geq 1-c_{\beta}\varrho^{\beta}(x,y)\quad\text{for every}\quad (x,y)\in F.$$
\item[(B5)] \phantomsection \label{cnd:B5}
There exists 
a coupling $(\phi^{(1)}_n,\phi^{(2)}_n)_{n\in \mathbb{N}_0}$ of $\Pi$ with transition law $C\geq Q$ (cf.~Section~\ref{sec:coupling}) such that for some $\Gamma>0$ and
\begin{align}\label{def:K}
K:=\left\{(x,y)\in{X^2}:\;(x,y)\in F\;\text{and}\; V(x)+V(y)<\Gamma\right\}
\end{align}   
we can choose $\gamma\in(0,1)$ and $c_{\gamma}>0$ for which
$$\mathbb{E}_{x,y}(\gamma^{-\rho_K})\leq c_{\gamma},\quad\text{whenever}\quad V(x)+V(y)<4b(1-a)^{-1},$$
where, in this case, 
\begin{align}\label{def:tau}
\rho_K
=\inf\left\{n\in\mathbb{N}:\;(\phi^{(1)}_n,\phi^{(2)}_n)\in K\right\}.
\end{align}
\end{itemize}

The theorem we quote below is proven by R. Kapica and M. \'Sl\k{e}czka in \cite{ks}.

\begin{theorem}\label{thm:spectral_gap}
Suppose that $\Pi:X\times\mathcal{B}_X\to[0,1]$  is a transition probability function such that conditions \hyperref[cnd:B0]{(B0)}-\hyperref[cnd:B5]{(B5)} hold with some substochastic kernel $Q:X^2\times\mathcal{B}_{X^2}\to[0,1]$ satisfying~\eqref{def:substoch}. Then the Markov operator $P$, defined by \eqref{def:markov_op}, possesses a unique invariant measure $\mu_*\in \mathcal{M}_{1}(X)$ such that $\mu_*\in\mathcal{M}_{1,1}^V(X)$, where $V$ is determined by \hyperref[cnd:B1]{(B1)}. Moreover, there exist constants $q\in(0,1)$ and $c>0$ such that
$$d_{FM}(P^n\mu,\mu_*)\leq q^n c\left(1+\langle V,\mu\rangle+\langle V,\mu_*\rangle\right)\quad \text{for every}\quad \mu\in \mathcal{M}_{1,1}^V(X),\;n\in\mathbb{N}_0.$$
\end{theorem}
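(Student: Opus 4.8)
The plan is to build a Markovian coupling from the substochastic kernel $Q$, derive an exponential estimate for the expected distance between the two coupled copies, and then pass from the coupling to the Fortet--Mourier distance. Concretely, by the construction recalled in Section~\ref{sec:coupling}, hypothesis \hyperref[cnd:B5]{(B5)} supplies a Markovian coupling $C\ge Q$ of $\Pi$, and I would work with its augmented version $\widehat C$ on $\widehat{X^2}=X^2\times\{0,1\}$. The indicator coordinate $\theta_n$ records, at each step, whether the transition was made according to the contractive part $Q$ or the residual part $R$; let $S_n=\sum_{k=1}^{n}\theta_k$ count the number of ``$Q$-steps''. The core of the argument is an estimate, for the coupled chain started from a point $(x,y)\in F$ close to the diagonal, of the form
\begin{align*}
\widehat{\mathbb{E}}_{x,y,\theta}\bigl(\varrho(\phi^{(1)}_n,\phi^{(2)}_n)^{\beta}\bigr)\le C\,\eta^{n}\bigl(1+V(x)+V(y)\bigr)
\end{align*}
for some $\eta\in(0,1)$. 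To get this one decomposes according to whether all of the first $n$ steps were $Q$-steps or not: on the event $\{S_n=n\}$, (B2) gives a geometric contraction of $\varrho$ by the factor $\delta$ at each step (using (B3) to keep control of the actual distance, not merely its expectation, so that the next step again starts inside $F$), while (B4) controls the probability of leaving this ``good'' event; on the complementary event one falls back on the Lyapunov bound (B1) to control $V(\phi^{(1)}_n)+V(\phi^{(2)}_n)$ and hence $\varrho^{\beta}$ up to a constant. Iterating and summing the geometric series yields the displayed bound.

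Next I would globalize this. For an arbitrary starting pair $(x,y)$ one uses (B5): the hitting time $\rho_K$ of the ``good'' set $K$ (see \eqref{def:K}) has a finite exponential moment, $\mathbb{E}_{x,y}(\gamma^{-\rho_K})\le c_\gamma$, whenever $V(x)+V(y)<4b(1-a)^{-1}$; combining with Lemma~\ref{lem:ks22} (applied to the product chain with Lyapunov function $(x,y)\mapsto V(x)+V(y)$, which satisfies the drift condition with constants $a,2b$ by (B1)) and Lemma~\ref{lem:ks21}, one obtains that the set $K$ is reached, and then essentially stays within reach, with an exponential moment bound controlled by $1+V(x)+V(y)$. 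Splitting the trajectory at the last visit to $K$ before time $n$ and applying the local estimate from the previous paragraph on the post-$K$ portion, one gets
\begin{align*}
\mathbb{E}_{x,y}\bigl(\varrho(\phi^{(1)}_n,\phi^{(2)}_n)\wedge 1\bigr)\le c\,q^{n}\bigl(1+V(x)+V(y)\bigr)
\end{align*}
for suitable $q\in(0,1)$, $c>0$, uniformly over initial pairs — this is exactly the content of the (unstated) Lemma~\ref{cor:g_useful} mentioned in the introduction, so in the paper I would simply invoke it here.

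Finally I would convert the coupling estimate into the Fortet--Mourier bound. Fix $\mu\in\mathcal{M}^V_{1,1}(X)$. Take the coupling started from the product measure $\mu\otimes\mu_*$, where $\mu_*$ is the invariant measure (whose existence, with $\mu_*\in\mathcal{M}^V_{1,1}(X)$, also follows from (B1) via a standard Krylov--Bogolyubov/tightness argument using that $V$ is a Lyapunov function and the Feller property (B0); in the paper this is part of Theorem~\ref{thm:spectral_gap}'s own statement, so I would cite it). Then the marginals of $(\phi^{(1)}_n,\phi^{(2)}_n)$ are $P^n\mu$ and $P^n\mu_*=\mu_*$, so for any $f\in Lip_{FM}(X)$,
\begin{align*}
|\langle f,P^n\mu-\mu_*\rangle|=\bigl|\mathbb{E}_{\mu\otimes\mu_*}\bigl(f(\phi^{(1)}_n)-f(\phi^{(2)}_n)\bigr)\bigr|\le\mathbb{E}_{\mu\otimes\mu_*}\bigl(\varrho(\phi^{(1)}_n,\phi^{(2)}_n)\wedge 2\bigr),
\end{align*}
using $\|f\|_{BL}\le1$. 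Integrating the previous display against $\mu\otimes\mu_*$ and using $\langle V,\mu\rangle,\langle V,\mu_*\rangle<\infty$ gives $d_{FM}(P^n\mu,\mu_*)\le q^n c\,(1+\langle V,\mu\rangle+\langle V,\mu_*\rangle)$, as claimed; uniqueness of $\mu_*$ in $\mathcal{M}_1(X)$ follows since any two invariant probability measures would have to be a distance $q^n c(\cdots)\to0$ apart, hence equal.

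I expect the main obstacle to be the local contraction estimate of the first paragraph: one must simultaneously track the honest distance $\varrho(\phi^{(1)}_n,\phi^{(2)}_n)$ (to ensure the chain remains in $F$ and the $Q$-mechanism can be re-applied) via (B3), the probability of staying in the ``all $Q$-steps'' regime via (B4), and the Lyapunov control (B1) on the bad event — and balance the resulting exponents so that the product of the contraction rate $\delta^\beta$ and the survival probability still beats any growth coming from the $V$-terms, producing a genuine $\eta<1$. The passage from this local bound to the global one, handling the excursions away from $K$ via the exponential moment bounds of Lemmas~\ref{lem:ks21}--\ref{lem:ks22}, is the second delicate point, but it is fairly standard once the local estimate is in hand; the final conversion to $d_{FM}$ is routine.
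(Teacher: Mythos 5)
The paper itself does not prove Theorem \ref{thm:spectral_gap} (it is quoted from Kapica and \'Sl\k{e}czka \cite{ks}), but your skeleton --- the augmented coupling, the three-way decomposition according to whether $K$ is hit in time, whether all subsequent steps are $Q$-steps, or a late $R$-step occurs, and the final conversion to $d_{FM}$ via a coupling started from $\mu\otimes\mu_*$ --- coincides with the machinery the paper itself reproduces in Section 3 (the estimates $I_1,I_2,I_3$ of Lemma \ref{lem:3parts} and Lemma \ref{cor:g_useful}), so the mixing part of your argument is essentially the intended one.

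The genuine gap is the existence of $\mu_*$. You propose Krylov--Bogolyubov ``via a standard tightness argument using that $V$ is a Lyapunov function and the Feller property (B0)'', with the parenthetical fallback of citing the theorem's own statement, which is circular since that is exactly what is being proved. In this paper a Lyapunov function is only required to be continuous, bounded on bounded sets, and to tend to infinity with $\varrho(x,\bar x)$; its sublevel sets are therefore bounded but, on a non-locally compact Polish space (precisely the setting the paper insists on, cf. the gene-expression model on a Banach space), bounded sets need not be relatively compact. Hence the drift bound $\sup_n\langle V,P^n\delta_x\rangle<\infty$ coming from (B1) does not yield tightness, and Krylov--Bogolyubov does not apply as stated. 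The route consistent with the hypotheses is the one \cite{ks} takes and the paper's setup anticipates (note the remark that $(\mathcal{M}_1(X),d_{FM})$ is complete): use the very coupling estimate you derived to show that $(P^n\mu)_{n}$ is Cauchy in $d_{FM}$, e.g. $d_{FM}(P^n\mu,P^m\mu)\le c\,q^m\left(1+\langle V,P^{n-m}\mu\rangle+\langle V,\mu\rangle\right)$ with $\langle V,P^{n-m}\mu\rangle$ bounded uniformly by (B1), and then use the Feller property (B0) to check that the limit is invariant; finiteness of $\langle V,\mu_*\rangle$ follows from (B1) by a truncation and monotone-convergence argument. The same truncation argument is also needed to upgrade your uniqueness claim from uniqueness within $\mathcal{M}_{1,1}^V(X)$ to uniqueness in $\mathcal{M}_1(X)$, since your convergence estimate only applies to initial (hence invariant) measures with finite $V$-moment.
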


As mentioned earlier, apart from Theorem \ref{thm:spectral_gap}, we will also need an intermediate result (although in a slightly stronger version than the one given in \cite{ks}). More precisely, for a suitably constructed Markovian coupling $(\phi^{(1)}_n,\phi^{(2)}_n)_{n\in\mathbb{N}_0}$ of $\Pi$, we will provide an estimation of the expression $\mathbb{E}_{x,y}|g(\phi_n^{(1)})-g(\phi_n^{(2)})|$, where $x,y\in X$ and $g\in Lip_b(X)$. Aiming to formulate and prove this result, we shall first make a few technical observations (cf.~\cite{dawid2}). 

Assume that $\Pi$ and $Q$ stand for the kernels considered in Theorem \ref{thm:spectral_gap}. Let $R$ be a~substochastic kernel on $X^2\times\mathcal{B}_{X^2}$ such that $C=Q+R$ is a coupling of $\Pi$ such that \hyperref[cnd:B5]{(B5)} holds, and let $\widehat{C}$ denote the augmented coupling of $\Pi$, i.e. the stochastic kernel on $\widehat{X^2}\times \mathcal{B}_{\widehat{X^2}}$, determined by (\ref{def:C_hat}) (cf. Section \ref{sec:coupling}). The Markov chains governed by the kernels $C$ and $\widehat{C}$ will be denoted by $(\phi^{(1)}_n,\phi^{(2)}_n)_{n\in\mathbb{N}_0}$ and $(\phi^{(1)}_n,\phi^{(2)}_n,\theta_n)_{n\in\mathbb{N}_0}$, repsectively.

Let us define
\begin{align}\label{def:Gamma0}
\Gamma_0=\sup\{\varrho(x,y):(x,y)\in K\},
\end{align}
where $K$ is given by (\ref{def:K}). Due to the definition of $K$ and the fact that $(x,y)\mapsto V(x)+V(y)$ is a~Lyapunov function, we see that $\Gamma_0<\infty$. 

Now, let $(x,y,\theta)\in \widehat{X^2}$ and fix arbitrary $n, M,N\in\mathbb{N}$ such that $n>M>N$. Further, consider the random times
$$\rho_K^{N}=\inf\left\{n\geq N:\left(\phi^{(1)}_n,\phi^{(2)}_n\right)\in K\right\},\;\;\;\rho_K=\rho_K^1,$$
and
$$\tau:=\tau_{\widehat{X^2}_Q}=\inf\left\{n\in\mathbb{N}:\left(\phi^{(1)}_k,\phi^{(2)}_k,\theta_k\right)\in \widehat{X^2}_Q \text{ for all }k\geq n\right\}$$
(cf. (\ref{def:tau}) and (\ref{def:rho^m})). 
Moreover, let us also introduce 
\begin{align}\label{def:HNn}
\mathcal{H}_{N,n}=\bigcap_{j=N}^n\left\{\theta_j=1\right\}\quad\text{and}\quad \mathcal{H}_{N,n}^c=(\widehat{X^2})^{\mathbb{N}_0}\backslash \mathcal{H}_{N,n}.
\end{align}
Obviously $$\widehat{\mathbb{C}}_{x,y,\theta}\left(\mathcal{H}_{N,n}^c\right)=\widehat{\mathbb{C}}_{x,y,\theta}\left(\bigcup_{j=N}^n\{\theta_j=0\}\right)\leq \widehat{\mathbb{C}}_{x,y,\theta}(\tau>N).$$
Using the notation $\widehat{\mathbb{C}}_{x,y,\theta}|_E:=\widehat{\mathbb{C}}_{x,y,\theta}(\cdot \cap E)$ for $E\in (\widehat{X^2})^{\mathbb{N}_0}$, we can write
\begin{align*}
\widehat{\mathbb{C}}_{x,y,\theta}\leq \widehat{\mathbb{C}}_{x,y,\theta}|_{\left\{\rho_K^{N}\leq M\right\}\cap \mathcal{H}_{N,n}}+\widehat{\mathbb{C}}_{x,y,\theta}|_{\left\{\rho_K^{N}>M\right\}}+\widehat{\mathbb{C}}_{x,y,\theta}|_{\mathcal{H}_{N,n}^c}.
\end{align*}
which, in view of (\ref{coupling_marginals}), gives
\begin{align}\label{3parts}
\mathbb{E}_{x,y}|f(\phi_n^{(1)})-f(\phi_n^{(2)})|&=\int_{X^2}|f(u)-f(v)|\,{\mathbb{C}}^n_{x,y}(du\times dv)\nonumber\\ 
&\leq \int_{X^2}\varrho(u,v)\,\widehat{\mathbb{C}}_{x,y,\theta}|_{\left\{\rho_K^{N}\leq M\right\}\cap \mathcal{H}_{N,n}}\left(\left(\phi^{(1)}_n,\phi^{(2)}_n\right)\in du\times dv\right)\nonumber\\
&+2{\mathbb{C}}_{x,y}\left(\rho_K^{N}>M\right)
+2\widehat{\mathbb{C}}_{x,y,\theta}\left(\tau>N\right)\;\;\;\text{for}\;\;\;f\in \fmf.
\end{align}

{\begin{lemma}\label{lem:3parts}
Under the assumptions of Theorem \ref{thm:spectral_gap},  
there exist constants $c_1,c_2,c_3\geq 0$,  $q_1,q_2,q_3\in(0,1)$ and $p\geq 1$ such that, for any $(x,y,\theta)\in \widehat{X^2}$ and $n,N,M\in\mathbb{N}$ satisfying $n>N>M$, the following inequalities hold:
\begin{align}
\label{cnd:I1} &I_1:=\int_{X^2}\varrho(u,v)\,\widehat{\mathbb{C}}_{x,y,\theta}|_{\left\{\rho_K^{N}\leq M\right\}\cap \mathcal{H}_{N,n}}\left(\left(\phi^{(1)}_n,\phi^{(2)}_n\right)\in du\times dv\right)\leq c_1q_1^{n-M},\\[0.08cm]
\label{cnd:I2} &I_2:={\mathbb{C}}_{x,y}\left(\rho_K^{N}>M\right)\leq c_2q_2^{M-pN}(1+V(x)+V(y)),\\[0.12cm]
\label{cnd:I3} &I_3:=\widehat{\mathbb{C}}_{x,y,\theta}\left(\tau>N\right)\leq c_3q_3^N(1+V(x)+V(y)),
\end{align}
where $\mathcal{H}_{N,n}$ is given by (\ref{def:HNn}). 
\end{lemma}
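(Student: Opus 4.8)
The plan is to prove the three bounds essentially independently, using two common ingredients. The first is the contraction estimate: for every $(u,v)\in F$, every $\theta$ and every $k\in\mathbb{N}_0$,
$$\widehat{\mathbb{E}}_{u,v,\theta}\Big[\varrho\big(\phi^{(1)}_k,\phi^{(2)}_k\big)\,\mathbbm{1}_{\{\theta_1=\dots=\theta_k=1\}}\Big]\le\delta^k\varrho(u,v),$$
which I would prove by induction on $k$: conditioning on the first step, on $\{\theta_1=1\}$ that step is governed by $Q$, so \hyperref[cnd:B2]{(B2)} keeps the chain in $F$ and contracts the expected distance by $\delta$, after which the inductive hypothesis applies via the Markov property. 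The second ingredient is a full-space exponential moment bound for $\rho_K$. Since the coupling $C$ has $\Pi$-marginals, \hyperref[cnd:B1]{(B1)} gives $\int(V(u)+V(v))\,C(x,y,du\,dv)=UV(x)+UV(y)\le a(V(x)+V(y))+2b$, so the coupling chain obeys a drift condition with constants $a,2b$ and Lyapunov function $W(x,y):=V(x)+V(y)$; Lemma \ref{lem:ks22} then yields $\lambda\in(0,1)$, $c_\lambda$ with $\mathbb{E}_{x,y}(\lambda^{-\rho_J})\le\lambda^{-1}c_\lambda(1+W(x,y))$ for $J:=\{W<4b/(1-a)\}$, which is precisely the sublevel set appearing in \hyperref[cnd:B5]{(B5)}. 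Writing $\rho_K\le\rho_J+\rho_K'$, with $\rho_K'$ the first hitting time of $K$ strictly after $\rho_J$, the strong Markov property at $\rho_J$ together with \hyperref[cnd:B5]{(B5)} (applicable since $(\phi^{(1)}_{\rho_J},\phi^{(2)}_{\rho_J})\in J$) gives, for $\Lambda:=\max(\gamma,\lambda)$, the bound $\mathbb{E}_{x,y}(\Lambda^{-\rho_K})\le c_\Lambda(1+V(x)+V(y))$ for all $(x,y)\in X^2$.

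For \eqref{cnd:I1}, I would decompose $I_1$ over the events $\{\rho_K^N=m\}$, $N\le m\le M$. On such an event $(\phi^{(1)}_m,\phi^{(2)}_m)\in K\subset F$ with $\varrho(\phi^{(1)}_m,\phi^{(2)}_m)\le\Gamma_0$ (the $\Gamma_0$ of \eqref{def:Gamma0} is finite because $(x,y)\mapsto V(x)+V(y)$ is a Lyapunov function and $K\subset\{V(x)+V(y)<\Gamma\}$), and $\mathcal{H}_{N,n}$ splits into its $\mathcal{F}_m$-part $\bigcap_{j=N}^m\{\theta_j=1\}$ and its future part $\bigcap_{j=m+1}^n\{\theta_j=1\}$. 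Applying the Markov property at time $m$ and then the contraction estimate to the remaining $n-m$ steps bounds the contribution of $\{\rho_K^N=m\}$ by $\delta^{n-m}\Gamma_0\,\widehat{\mathbb{C}}_{x,y,\theta}(\rho_K^N=m)$; summing over $m$ and using $\delta^{n-m}\le\delta^{n-M}$ gives $I_1\le\Gamma_0\,\delta^{\,n-M}$, so $c_1=\Gamma_0$, $q_1=\delta$. For \eqref{cnd:I2}, conditioning at time $N$ forces the chain restarted at $N$ to avoid $K$ for $M-N$ steps, whence $I_2\le\mathbb{E}_{x,y}\big[\mathbb{C}_{(\phi^{(1)}_N,\phi^{(2)}_N)}(\rho_K>M-N)\big]$; Markov's inequality and the full-space bound give $\mathbb{C}_{(u,v)}(\rho_K>M-N)\le\Lambda^{M-N}c_\Lambda(1+V(u)+V(v))$, and iterating \hyperref[cnd:B1]{(B1)} $N$ times gives $\mathbb{E}_{x,y}[V(\phi^{(i)}_N)]=U^NV(\cdot)\le V(\cdot)+b/(1-a)$. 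Hence $I_2\le c_2\,\Lambda^{M-N}(1+V(x)+V(y))$, i.e. \eqref{cnd:I2} with $p=1$, $q_2=\Lambda$ (and then with any $p\ge1$).

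For \eqref{cnd:I3}, I would apply Lemma \ref{lem:ks21} to the augmented chain with Lyapunov function $W(x,y,\theta)=V(x)+V(y)$, bounded set $\widehat K:=K\times\{0,1\}$ and $B:=\widehat{X^2}_Q$, so that $\tau_B=\tau$. Here \eqref{eq:lem21i} is the full-space bound just obtained (the $\theta$-coordinate being irrelevant for $\widehat K$), and \eqref{eq:lem21ii} holds with $c_\varkappa=1$ since $\rho_{\widehat{X^2}_R}\ge1$ forces $\varkappa^{\rho_{\widehat{X^2}_R}}\le\varkappa<1$. The only substantial point is \eqref{eq:lem21iii}: that from $\widehat K$ the coupling stays in $\widehat{X^2}_Q$ forever (equivalently $\theta_n=1$ for all $n\ge1$) with probability bounded away from $0$. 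I would argue in two stages. Using \hyperref[cnd:B3]{(B3)} finitely many times, for $(x,y)\in K$ the event that each of the first $T_0$ transitions uses $Q$ and lands with distance at most $\delta$ times the previous one has probability at least $\varepsilon_0^{T_0}>0$, where $\varepsilon_0:=\inf_{(x',y')\in F}Q\big(x',y',U(\delta\varrho(x',y'))\big)$; on this event the chain is in $F$ at time $T_0$ with distance at most $\delta^{T_0}\Gamma_0=:r^*$. Then, for any $(u,v)\in F$ with $\varrho(u,v)\le r^*$, decomposing $\{\exists k\ge1:\theta_k=0\}$ over the disjoint events $\{\theta_1=\dots=\theta_k=1,\theta_{k+1}=0\}$, conditioning at time $k$ and using \hyperref[cnd:B4]{(B4)}, Jensen's inequality ($\beta\le1$) and the contraction estimate, one gets
$$\widehat{\mathbb{C}}_{u,v,\theta}\big(\exists k\ge1:\theta_k=0\big)\le\sum_{k\ge0}c_\beta\big(\delta^k r^*\big)^\beta=\frac{c_\beta (r^*)^\beta}{1-\delta^\beta},$$
which is at most $1/2$ once $T_0$ is chosen large enough (possible because $\varrho\le\Gamma_0$ on $K$, so $r^*=\delta^{T_0}\Gamma_0\to0$). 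Combining the two stages via the Markov property at time $T_0$ gives $\inf_{\widehat K}\widehat{\mathbb{C}}_{x,y,\theta}(\theta_n=1\ \text{for all}\ n\ge1)\ge\tfrac12\varepsilon_0^{T_0}>0$, which is \eqref{eq:lem21iii}. Lemma \ref{lem:ks21} then yields $\zeta\in(0,1)$, $c_\zeta$ with $\widehat{\mathbb{E}}_{x,y,\theta}(\zeta^{-\tau})\le c_\zeta(1+V(x)+V(y))$, and Markov's inequality gives $I_3\le c_\zeta\,\zeta^{N}(1+V(x)+V(y))$. The step I expect to be the main obstacle is \eqref{eq:lem21iii}: \hyperref[cnd:B3]{(B3)} only gives a fixed positive per-step contraction probability, so one cannot contract "for all time" directly; the resolution is to contract for just enough (finitely many, carefully chosen) steps to push the distance below a level at which \hyperref[cnd:B4]{(B4)} makes the per-step probabilities of switching to $R$ summable with total mass $<1/2$, and only then let the coupling evolve freely. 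Everything else is routine bookkeeping with the (strong) Markov property, the monotonicities of $t\mapsto\delta^{\pm t}$, and iteration of the drift condition.
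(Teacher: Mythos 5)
Your proposal follows essentially the same route as the paper's proof: the same treatment of $I_1$ (decompose over the first hitting time of $K$ after $N$, use $\varrho\le\Gamma_0$ on $K$ and the $\delta$-contraction of $Q$ on $F$ for the remaining steps), the same mechanism behind $I_2$ (the drift for $\overline{V}(x,y)=V(x)+V(y)$, Lemma \ref{lem:ks22}, the strong Markov property at the hitting time of $J$ combined with \hyperref[cnd:B5]{(B5)}, then Markov's inequality), and the same strategy for $I_3$ (Lemma \ref{lem:ks21} applied to the augmented chain with $\widehat{K}=K\times\{0,1\}$ and $B=\widehat{X^2}_Q$, with \eqref{eq:lem21iii} obtained by contracting for finitely many steps via \hyperref[cnd:B3]{(B3)} until the distance is so small that, by \hyperref[cnd:B4]{(B4)} and the $\beta$-moment contraction, the summed probability of ever taking an $R$-step is below $1/2$ — this is exactly the paper's choice of $k_0$ with $\delta^{\beta k_0}<(1-\delta^{\beta})/(2c_{\beta}\Gamma_0^{\beta})$). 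Your handling of $I_2$ is in fact marginally cleaner: conditioning at time $N$ and using a full-space bound for the plain $\rho_K$ together with $U^NV\le V+b/(1-a)$ yields \eqref{cnd:I2} with $p=1$, whereas the paper carries the factor $\lambda^{-N}$ from \eqref{estim:rho_J} and compensates with $p=\lceil\log_{\Lambda}\lambda\rceil$; either version suffices for Lemma \ref{cor:g_useful}.

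The one step I would not accept as written is your dismissal of \eqref{eq:lem21ii} ``with $c_{\varkappa}=1$''. As the hypothesis is transcribed in Lemma \ref{lem:ks21} it indeed reads $\varkappa^{\rho_{X\setminus B}}$ with $\varkappa\in(0,1)$ and is then vacuous; but this is almost certainly a misprint for an exponential moment of the exit time from $B$ (i.e. $\varkappa^{-\rho_{X\setminus B}}$, equivalently a geometric tail for $\rho_{X\setminus B}$): a trivially satisfiable hypothesis could contribute nothing to controlling the absorption time $\tau_B$, since nothing would then rule out arbitrarily long sojourns in $B$ before each exit. Accordingly, the paper verifies a genuinely nontrivial estimate at this point, namely $\widehat{\mathbb{C}}_{x,y,\theta}\left(\rho_{\widehat{X^2}_R}=k\right)\le c_{\beta}\delta^{\beta(k-1)}\Gamma_0^{\beta}$ for $(x,y,\theta)\in K\times\{0,1\}$, obtained from \hyperref[cnd:B4]{(B4)} and \eqref{estim:H2more}, and sums the resulting geometric series. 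Note that this is precisely the bound you already derive in the second stage of your verification of \eqref{eq:lem21iii} (the probability that the first $R$-step occurs at time $k+1$), so the repair is a one-line addition; but as the proposal stands, this hypothesis of Lemma \ref{lem:ks21} is not actually verified, and relying on the literal (typo-affected) wording is the only real gap in an otherwise faithful reproduction of the paper's argument.
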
}

\begin{proof}
 
For $i\in\mathbb{N}$ and $(u,v)\in X^2$, we define $_K Q^i(u,v,\cdot):\mathcal{B}_{X^2}\to[0,1]$ by setting
\begin{align*}
&_K Q^i(u,v,A)\\
&=\widehat{\mathbb{C}}_{u,v,\theta}\left(\left(\phi^{(1)}_i,\phi^{(2)}_i,\theta_i\right)\in (K\cap A)\times\{1\},\;\left(\phi^{(1)}_k,\phi^{(2)}_k,\theta_k\right)\in \left(X^2\backslash K\right)\times\{1\}\;\text{for}\;k< i\right)\\
&=\widehat{\mathbb{C}}_{u,v,\theta}\left(\left\{\left(\phi^{(1)}_i,\phi^{(2)}_i\right)\in A\right\}\cap\left\{\rho_K=i\right\}\cap\left\{\theta_k=1\;\text{for}\; k\leq i\right\}\right)\quad \text{for any}\quad A\in\mathcal{B}_{X^2}.
\end{align*}
Obviously, $_KQ^i\left(u,v,X^2\backslash K\right)=0$. Now, let $n>N>M$. Due to the definitions of $\mathcal{H}_{N,n}$ and $\rho_K^{N}$,   
we obtain
\begin{align*}
&\widehat{\mathbb{C}}_{x,y,\theta}|_{\left\{\rho_K^N=i\right\}\cap \mathcal{H}_{N,n}}\left((\phi^{(1)}_n,\phi^{(2)}_n)\in A\right)\\
&\qquad=\int_{X^2}\int_K {Q^{n-i}}(w,z,A) \,_KQ^{i-N+1}(u,v,dw\times dz)\,\mathbb{C}^{N-1}_{x,y}(du\times dv)
\end{align*}
for $A\in X^2$, $(x,y,\theta)\in \widehat{X^2}$ and $i\in\{N,\ldots,n\}$. 
Clearly, $\{\rho_K^N\leq M\}=\bigcup_{i=N+1}^M\{\rho_K^N=i\}$, and hence, for $(x,y)\in X^2$, we have
\begin{align*}
\begin{aligned}
I_1\leq \sum_{i=N}^M\int_{X^2}\int_K\int_{X^2}\varrho(s,t)\,Q^{n-i}(w,z,ds\times dt)\,{_KQ}^{i-N+1}(u,v,dw\times dz)\,\mathbb{C}_{x,y}^{N-1}(du\times dv).
\end{aligned}
\end{align*}
From assumption \hyperref[cnd:B2]{(B2)} and the definition of $K$ it follows that
\begin{align*}
\begin{aligned}
I_1&\leq\sum_{i=N}^M\int_{X^2}\int_K\int_{F}\varrho(s,t)\,Q^{n-i}(w,z,ds\times dt)\,{_KQ}^{i-N+1}(u,v,dw\times dz)\,\mathbb{C}_{x,y}^{N-1}(du\times dv)\\
&\leq\sum_{i=N}^M\delta^{n-i}\int_{X^2}\int_K\varrho(w,z)\,{_KQ}^{i-N+1}(u,v,dw\times dz)\,\mathbb{C}_{x,y}^{N-1}(du\times dv)\\
&\leq \Gamma_0\delta^{n-M} \int_{X^2}\left(\sum_{i=N}^M{_KQ}^{i-N+1}(u,v,K)\right)\,\mathbb{C}_{x,y}^{N-1}(du\times dv)\\
&\leq \Gamma_0\delta^{n-M}\int_{X^2}\mathbb{C}_{u,v}\left(\rho_K^N\leq M-N+1\right)\,\mathbb{C}_{x,y}^{N-1}(du\times dv)\leq \Gamma_0\delta^{n-M}\quad\text{for}\quad (x,y)\in X^2,
\end{aligned}
\end{align*}
whence \eqref{cnd:I1} holds with $q_1=\delta$ and $c_1=\Gamma_0$, where $\Gamma_0$ is given by (\ref{def:Gamma0}).

Let us now introduce $\overline{V}(x,y):=V(x)+V(y)$ for $(x,y)\in X^2$ and
$$J=\left\{(x,y)\in {X^2}:\;\overline{V}(x,y)\leq {4b}{(1-a)^{-1}}\right\}.$$ 
Obviously, $\overline{V}$ is a~Lyapunov function on $X^2$, which, due to \hyperref[cnd:B1]{(B1)}, satisfies the inequality $U\overline{V}(x,y)\leq a\overline{V}(x,y)+2b\;\;\;\mbox{for any}\;\;\;(x,y)\in X^2.$ Consequently, referring to Lemma \ref{lem:ks22}, we can choose $\lambda\in(0,1)$ and $c_{\lambda}>0$ so that 
\begin{align}\label{estim:rho_J}
{\mathbb{E}}_{x,y}\left(\lambda^{-\rho_J^{m}}\right)\leq \lambda^{-m}c_{\lambda}(1+\overline{V}(x,y))\;\;\;\text{for all}\;\;\;(x,y)\in X^2,\; m\in\mathbb{N}.
\end{align}
Define \hbox{$T:(X^2)^{\mathbb{N}_0}\to(X^2)^{\mathbb{N}_0}$} by \hbox{$T\left((x_n,y_n)_{n\in\mathbb{N}_0}\right)=\left(x_{n+1},y_{n+1}\right)_{n\in\mathbb{N}_0}$}, and let
$$\mathcal{F}_{\rho_J^{N}}=\{A\in\mathcal{F}:\,\{\rho_J^{N}=k\}\cap A\in\mathcal{F}_k\,\text{ for }\,k\in\mathbb{N}_0\},$$ where $(\mathcal{F}_k)_{k\in\n_0}$ stands for the natural filtration of $(\phi^{(1)}_k,\phi^{(2)}_k)_{k\in\n_0}$. Now, put $\Lambda:=\max\{\lambda,\gamma\}$. Using the fact that $\rho_K^{N}\leq \rho_J^{N}+\rho_K\circ T^{\rho_J^{N}}$, and, further, applying sequentially the strong Markov property, condition \hyperref[cnd:B5]{(B5)} and inequality (\ref{estim:rho_J}) with $m=N$,  
we obtain
\begin{align}\label{eq:ad_lem21i}
\begin{aligned}
{\mathbb{E}}_{x,y}\left(\Lambda^{-\rho_K^{N}}\right)
&\leq{\mathbb{E}}_{x,y}\left(\Lambda^{-\rho_J^{N}}\Lambda^{-\rho_K\circ T^{\rho_J^{N}}}\right)
\leq{\mathbb{E}}_{x,y}\left(\lambda^{-\rho_J^{N}}{\mathbb{E}}_{x,y}\left(\gamma^{-\rho_K\circ T^{\rho_J^{N}}}|\mathcal{F}_{\rho_J^{N}}\right)\right)\\
&= {\mathbb{E}}_{x,y}\left(\lambda^{-\rho_J^{N}}{\mathbb{E}}_{\left(\phi^{(1)}_{\rho_J^{N}},\phi^{(2)}_{\rho_J^{N}}\right)}\left(\gamma^{-\rho_K}\right)\right)\\
&\leq c_{\gamma}\mathbb{E}_{x,y}\left(\lambda^{-\rho_J^{N}}\right)\leq \lambda^{-N}c_{\gamma}c_{\lambda}(1+\overline{V}(x,y)),
\end{aligned}
\end{align}
for some $c_{\gamma}>0$. Then, the Markov inequality yields that
\begin{align*}
\begin{aligned}
I_2=\mathbb{C}_{x,y}\left(\rho_K^{N}>M\right)&\leq \Lambda^{M}\lambda^{-N}c_{\gamma}c_{\lambda}(1+\overline{V}(x,y))
= \Lambda^{M-N \log_{\Lambda}\lambda}c_{\gamma}c_{\lambda}(1+\overline{V}(x,y))\\
&\leq \Lambda^{M-pN}c_{\gamma}c_{\lambda}(1+\overline{V}(x,y))\;\;\;\text{for all}\;\;\; (x,y)\in X^2,
\end{aligned}
\end{align*}
where $p=\left\lceil\log_{\Lambda}\lambda\right\rceil\geq 1.$
Hence, taking $c_2=c_{\gamma}c_{\lambda}$ and $q_2=\Lambda$, we obtain \eqref{cnd:I2}.

Finally, we have to deal with component $I_3$. For this purpose, we shall use Lemma \ref{lem:ks21} for the chain $(\phi^{(1)}_n,\phi^{(2)}_n,\theta_n)_{n\in\mathbb{N}_0}$ with $B=\widehat{X^2}_Q$ and $\widehat{K}:=K\times\{0,1\}$ in the role of $K$. Note that, due to (\ref{eq:ad_lem21i}), we have
\begin{align*}
{\widehat{\mathbb{E}}}_{x,y,\theta}\left(\Lambda^{-\widehat{\rho}_{\widehat{K}}}\right)= \mathbb{E}_{x,y}(\Lambda^{-\rho_K})\leq{\mathbb{E}}_{x,y}\left(\Lambda^{-\rho_K^{N}}\right)\leq \lambda^{-N}c_{\gamma}c_{\lambda}(1+\overline{V}(x,y)),\;\;\;(x,y,\theta)\in\widehat{X^2},
\end{align*} 
where $\widehat{\rho}_{\widehat{K}}$ is the first hitting time on $\widehat{K}$ for the augmented coupling of $P$. This corresponds to (\ref{eq:lem21i}) in Lemma \ref{lem:ks21}. Further, the Jensen inequality, together with assumption \hyperref[cnd:B2]{(B2)}, implies that, for any $(x,y)\in F$ and any $k\in\mathbb{N}$,
\begin{align}\label{estim:H2more}
\begin{aligned}
\int_{X^2}\varrho^{\beta}(u,v)\,Q^k(x,y,du\times dv)
&\leq\int_{X^2}\int_{X^2}\varrho^{\beta}(u,v)\,Q(w,z,du\times dv)\,Q^{k-1}(x,y,dw\times dz)\\
&\leq \int_{X^2}\left(\int_{X^2}\varrho(u,v)\,Q(w,z,du\times dv)\right)^{\beta}\,Q^{k-1}(x,y,dw\times dz)\\
&\leq\delta^{\beta}\int_{X^2}\varrho^{\beta}(w,z)\,Q^{k-1}(x,y,dw\times dz)\leq\ldots\leq \delta^{\beta k}\varrho^{\beta}(x,y).
\end{aligned}
\end{align}
Applying sequentially \hyperref[cnd:B4]{(B4)}, (\ref{estim:H2more}) and (\ref{def:Gamma0}) we conclude that, for every $(x,y,\theta)\in K\times\{0,1\}$,
\begin{align*}
\begin{aligned}
\widehat{\mathbb{C}}_{x,y,\theta}\left(\rho_{\widehat{X^2}_R}=k\right)&=\int_{X^2}\left(1-Q(u,v,X^2)\right)\,Q^{k-1}(x,y,du\times dv)\\
&\leq c_{\beta}\int_{X^2}\varrho^{\beta}(u,v)\,Q^{k-1}(x,y,du\times dv)\leq c_{\beta}\delta^{\beta(k-1)}\Gamma_0^{\beta},
\end{aligned}
\end{align*}
which gives (\ref{eq:lem21ii}) with $B=\widehat{X^2}_Q$, $\varkappa=\delta^{\beta}$ and $c_{\varkappa}=c_{\beta}(\Gamma_0/\delta)^{\beta}\sum_{k=1}^{\infty}\varkappa^{2k}<\infty$. 
Finally, we need to establish (\ref{eq:lem21iii}).
From \hyperref[cnd:B4]{(B4)} and (\ref{estim:H2more}) it follows that, for any $(x,y)\in F$ and any $k\in\mathbb{N}$,
\begin{align}\label{estim:H4more}
\begin{aligned}
Q^k(x,y,X^2)
&=\int_{X^2}Q(u,v,X^2)\,Q^{k-1}(x,y,du\times dv)\\
&\geq Q^{k-1}(x,y,X^2)-c_{\beta}\int_{X^2}\varrho^{\beta}(u,v)\,Q^{k-1}(x,y,du\times dv)\\
&\geq Q^{k-1}(x,y,X^2)-c_{\beta}\delta^{\beta (k-1)}\varrho^{\beta}(x,y)\\
&\geq\ldots\geq 1-c_{\beta}\sum_{i=0}^{k-1}\delta^{\beta i}\varrho^{\beta}(x,y)\geq 1-\frac{c_{\beta}}{1-\delta^{\beta}}\varrho^{\beta}(x,y).
\end{aligned}
\end{align}
Assumption \hyperref[cnd:B3]{(B3)} guarantees that $\varphi:=\inf_{(x,y)\in F}Q\big(x,y,U\left(\delta\varrho(x,y)\right)\big)>0$,
and we can further show that
\begin{align}\label{estim:H3more}
Q^k\left(x,y,U \left(\delta^k \varrho(x,y)\right)\right)\geq \varphi^k\quad \text{for all}\quad (x,y)\in F,\;k\in\n.
\end{align}
Indeed, we have
\begin{align*}
Q\big(u,v,U\left(\delta^{k+1}\varrho(x,y)\right)\big)\geq Q\big(u,v,U(\delta\varrho(u,v)\big)\geq \varphi\quad\text{for}\quad (u,v)\in U(\delta^k\varrho(x,y)),
\end{align*}
since $U(\delta\varrho(u,v))\subset U(\delta^{k+1}\varrho(x,y))$ for all $(u,v)\in U(\delta^k\varrho(x,y))$, and therefore
\begin{align*}
Q^{k+1}\left(x,y,U\left(\delta^{k+1}\varrho(x,y)\right)\right)
&\geq\int_{U\left(\delta^{k}\varrho(x,y)\right)}Q\left(u,v,U\left(\delta^{k+1}\varrho(x,y)\right)\right)\,Q^k(x,y,du\times dv)\\
&\geq\varphi \,Q^k\left(x,y,U\left(\delta^{k}\varrho(x,y)\right)\right)\geq\ldots\geq \varphi^{k+1}\quad\text{for}\quad (x,y)\in F.
\end{align*}
Applying sequentially (\ref{estim:H4more}), (\ref{estim:H2more}) and (\ref{estim:H3more}), we see that, for any $(x,y)\in K\subset F$ and $k,m\in\mathbb{N}$, the following inequalities hold:
\begin{align*}
Q^{k+m}(x,y,X^2)
&\geq\int_{U(\delta^{k}\varrho(x,y))} Q^m\left(u,v,X^2\right)\,Q^k(x,y,du\times dv)\\
&\geq \int_{U(\delta^{k}\varrho(x,y))}\left(1-\frac{c_{\beta}}{1-\delta^{\beta}}\varrho^{\beta}(u,v)\right) \,Q^k(x,y,du\times dv)\\
&\geq \left(1-\frac{c_{\beta}}{1-\delta^{\beta}}\delta^{\beta k}\varrho(x,y)^{\beta}\right) Q^k\big(x,y,U(\delta^k\varrho(x,y))\big)
\geq \left(1-\frac{c_{\beta}\delta^{\beta k}}{1-\delta^{\beta}}\Gamma_0^{\beta}\right) \varphi^k,
\end{align*}
where $\Gamma_0$ is determined by (\ref{def:Gamma0}). 
We can now choose $k_0\in\mathbb{N}$ so large that \hbox{$\delta^{\beta k_0}<\linebreak(1-\delta^{\beta})/\left(2c_{\beta}\Gamma_0^{\beta}\right)$,} which immediately implies that
\begin{align*}
Q^{k_0+m}\left(x,y,X^2\right)\geq \epsilon\quad\text{for all}\quad m\in\mathbb{N},\; (x,y)\in K,\quad\text{where}\quad\epsilon:=\varphi^{k_0}/2.
\end{align*}
Since the lower bound does not depend on $m\in\mathbb{N}$, we also have 
$$\widehat{\mathbb{C}}_{x,y,\theta}\left(\left\{ (\phi^{(1)}_k,\phi^{(2)}_k,\theta_k)\in\widehat{X^2}_Q\;\text{for all}\;k\in\mathbb{N}\right\}\right)=\lim_{m\to\infty}Q^{m}(x,y,X^2)\geq \epsilon$$
for any $(x,y,\theta)\in K\times\{0,1\}$,  
which completes the proof of (\ref{eq:lem21iii}) and therefore, due to Lemma \ref{lem:ks21}, implies the existence of constants $\zeta\in(0,1)$ and $c_{\zeta}>0$
such that
\begin{align*}
\widehat{\mathbb{E}}_{x,y,\theta}\left(\zeta^{-\tau}\right)\leq c_{\zeta}(1+\overline{V}(x,y))\quad\text{for}\quad (x,y,\theta)\in \widehat{X^2}.
\end{align*} 
Hence, using the Markov inequality, we obtain
\begin{align}\label{estim3}
I_3=\widehat{\mathbb{C}}_{x,y,\theta}\left(\tau>N\right)\leq c_{\zeta}\zeta^{N}(1+\overline{V}(x,y)),
\end{align}
and one can easily observe that \eqref{cnd:I3} holds with $c_3=c_{\zeta}$ and $q_3=\zeta$.
\end{proof}

We are now in a position to prove the announced lemma.

\begin{lemma}\label{cor:g_useful}
Under the assumptions of Theorem \ref{thm:spectral_gap},  
there exist $q\in(0,1)$ and $c>0$ such that
\begin{align}
\label{eq:lemma_thesis}
\mathbb{E}_{x,y}\left|g\left(\phi_n^{(1)}\right)-g\left(\phi_n^{(2)}\right)\right|\leq c\norma{g}_{BL}q^n(1+V(x)+V(y))
\end{align}
for all $(x,y)\in X^2$, $g\in Lip_b(X)$ and $n\in\mathbb{N}_0$.
\end{lemma}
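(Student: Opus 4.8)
\emph{Strategy.} The plan is to rescale $g$ into the unit ball $\fmf$, feed the decomposition \eqref{3parts} into the three estimates of Lemma~\ref{lem:3parts}, and then choose the auxiliary indices $M,N$ as linear functions of $n$ so that all three exponential rates become $\Theta(n)$.

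First I would handle the trivial case $g\equiv 0$ and, for $g\neq 0$, pass to $f:=g/\norma{g}_{BL}$; since $\norma{f}_{BL}=1$ we have $f\in\fmf$, and linearity of the integral gives
\begin{equation*}
\mathbb{E}_{x,y}\left|g\left(\phi_n^{(1)}\right)-g\left(\phi_n^{(2)}\right)\right|=\norma{g}_{BL}\,\mathbb{E}_{x,y}\left|f\left(\phi_n^{(1)}\right)-f\left(\phi_n^{(2)}\right)\right|.
\end{equation*}
So it suffices to produce $q\in(0,1)$ and $c>0$, independent of $x,y,n$ and $g$, such that $\mathbb{E}_{x,y}|f(\phi_n^{(1)})-f(\phi_n^{(2)})|\leq c\,q^{n}(1+V(x)+V(y))$ for all $f\in\fmf$, $(x,y)\in X^2$ and $n\in\mathbb{N}_0$.

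Next, fixing an arbitrary value of $\theta$ (the left-hand side does not depend on it, by \eqref{coupling_marginals}), I would use \eqref{3parts} to get $\mathbb{E}_{x,y}|f(\phi_n^{(1)})-f(\phi_n^{(2)})|\leq I_1+2I_2+2I_3$ for all $n,M,N\in\mathbb{N}$ with $n>M>N$, and then insert the three estimates of Lemma~\ref{lem:3parts}:
\begin{equation*}
\mathbb{E}_{x,y}\left|f\left(\phi_n^{(1)}\right)-f\left(\phi_n^{(2)}\right)\right|\leq c_1q_1^{\,n-M}+\bigl(2c_2q_2^{\,M-pN}+2c_3q_3^{\,N}\bigr)\bigl(1+V(x)+V(y)\bigr),
\end{equation*}
with $c_i\geq 0$, $q_i\in(0,1)$ and $p\geq 1$ as supplied there. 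I would then fix $a\in\bigl(0,\tfrac{1}{p}\bigr)$, put $b:=\tfrac{1+pa}{2}$ (so that $pa<b<1$), and take $N=N_n:=\lfloor an\rfloor$, $M=M_n:=\lfloor bn\rfloor$. A short computation shows there is $n_0\in\mathbb{N}$ with $1\leq N_n<M_n<n$ for all $n\geq n_0$ and with each of $n-M_n$, $M_n-pN_n$ and $N_n$ bounded below by a fixed positive multiple of $n$; using $1+V(x)+V(y)\geq 1$ to supply the missing factor in the first term, the right-hand side above becomes $\leq c'\,\bar{q}^{\,n}(1+V(x)+V(y))$ for all $n\geq n_0$, where $\bar{q}\in(0,1)$ and $c'>0$ depend only on $c_i,q_i,p,a$.

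Finally, for the finitely many $n\in\{0,1,\dots,n_0-1\}$ I would use the crude bound $\mathbb{E}_{x,y}|f(\phi_n^{(1)})-f(\phi_n^{(2)})|\leq 2\norma{f}_{\infty}\leq 2\leq 2\bar{q}^{-n_0}\bar{q}^{\,n}(1+V(x)+V(y))$, and then enlarge the constant so that a single pair $(q,c)$ with $q:=\bar{q}$ works for every $n\in\mathbb{N}_0$; multiplying back by $\norma{g}_{BL}$ recovers \eqref{eq:lemma_thesis}. I expect the only genuinely delicate step to be the bookkeeping in the previous paragraph — the slopes $a,b$ must be chosen so that the constraints $n>M>N$ (required by \eqref{3parts}) and $M>pN$ (required for $I_2$ to decay) hold once $n$ is large, while keeping all three exponents linear in $n$; the rest is either a direct appeal to Lemma~\ref{lem:3parts} or routine estimation.
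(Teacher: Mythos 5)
Your proposal is correct and follows essentially the same route as the paper: both feed the decomposition \eqref{3parts} into the three bounds of Lemma~\ref{lem:3parts}, choose $N$ and $M$ as linear functions of $n$ (the paper takes $N=\lfloor n/(4p)\rfloor$, $M=\lceil n/2\rceil$, which is just a concrete instance of your slopes $a<1/p$, $b=(1+pa)/2$) so that $n-M$, $M-pN$ and $N$ all grow linearly, and then cover the finitely many small $n$ with the crude bound $|g(u)-g(v)|\leq 2\norma{g}_{BL}$ and an enlarged constant. Your normalization $f=g/\norma{g}_{BL}$ is only a cosmetic variant of the paper's direct use of $|g|_{Lip}$ and $\norma{g}_{\infty}$.
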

\pagebreak
\begin{proof}
Let $g\in Lip_b(X)$. 
According to (\ref{3parts}) and Lemma~\ref{lem:3parts}, we know that there exist constants $c_1,c_2,c_3\geq 0$,  $q_1,q_2,q_3\in(0,1)$ and $p\geq 1$ such that the inequality
\begin{align*}
\mathbb{E}_{x,y}\left|g\left(\phi_n^{(1)}\right)-g\left(\phi_n^{(2)}\right)\right|&=\int_{X^2}|g(u)-g(v)|\,{\mathbb{C}}^n_{x,y}(du\times dv)\\
&\leq |g|_{Lip}\left(c_1q_1^{n-M}\right)+2\|g\|_{\infty}\left(c_2q_2^{M-pN}(1+V(x)+V(y))\right)\\
&\quad\,+2\|g\|_{\infty}\left(c_3q_3^N(1+V(x)+V(y))\right)
\end{align*}
holds for all $x,y\in X$ and $n,M,N\in\mathbb{N}$ satisfying $n>N>M$. 
Now, define $n_0=\lceil 4p\rceil$ and fix an~arbitrary $n>n_0$. Letting $N=\lfloor n/(4p)\rfloor$ and $M=\lceil n/2\rceil$, we obtain
\begin{align*}
 \mathbb{E}_{x,y}\left|g\left(\phi_n^{(1)}\right)-g\left(\phi_n^{(2)}\right)\right|\leq \norma{g}_{BL}\bar{c} q^n(1+V(x)+V(y))\;\;\;\mbox{for all}\;\;\;n>n_0,
\end{align*}
where 
$$q=\max\left\{q_1^{1/2},q_2^{1/4},q_3^{1/(4p)}\right\}\in (0,1),\quad \bar{c}=\max\left\{q_1^{-1},q_3^{-1}\right\}(c_1+2c_2+2c_3)>0.$$
Since $\sup_{u,v\in X}|g(u)-g(v)|\leq 2\norma{g}_{BL}$, we see that the assertion of Lemma \ref{cor:g_useful} holds for arbitrary $n\in\mathbb{N}_0$ and $c:=2\max(\bar{c},1)q^{-n_0}$. 
\end{proof}

\section{The CLT}\label{sec:CLT}
The essential ideas of this section are motivated by \cite{hhsw} and \cite{horbacz_clt}, both based on \cite{mw}. 

Let $(X,\varrho)$ be a Polish space, and let $(\phi_n)_{n\in\mathbb{N}_0}$ be an $X$-valued time-homogeneous Markov chain with transition law $\Pi$ and arbitrary initial distribution $\mu\in\mathcal{M}_1(X)$. As before, $P$ and $U$ will denote the operators defined by \eqref{def:markov_op} an \eqref{def:dual_op}, respectively.

Assume that $\mu_*\in\mathcal{M}_1(X)$ is a~unique invariant measure of $P$. For any $n\in\mathbb{N}$ and any Borel function $g:X\to\mathbb{R}$ we define 
\begin{align*}
s_n(g)=\frac{g(\phi_1)+\ldots+g(\phi_{n})}{\sqrt{n}},
\end{align*}
\begin{align}\label{def:sigma}
\sigma^2(g)=\lim_{n\to\infty}\mathbb{E}_{\mu_*}\left(s_n^2\left(g\right)\right),
\end{align}
and write 
$\Phi  s_n(g)$ for the distribution of $s_n(g)$. Moreover, we define $\bar{g}=g-\langle g,\mu_*\rangle$.

For any given Borel function $g:X\to\mathbb{R}$ such that $\langle g^2,\mu_*\rangle<\infty$, we say that the CLT holds for $\left(g(\phi_n)\right)_{n\in\mathbb{N}_0}$, if 
$\sigma^2(\bar{g})<\infty$ 
and 
$\Phi s_n(\bar{g})$ converges weakly to $\mathcal{N}(0,\sigma^2(\bar{g}))$, as $n\to\infty$.

Now, let $D[0,1]$ denote the Skorochod space, i.e. the collection of all c\'adl\'ag functions on $[0,1]$ (cf. \cite{bill}). For any Borel function $g:X\to\mathbb{R}$, we introduce a process $({B}_n(g))_{n\in\mathbb{N}}$ with values in $D[0,1]$ by setting
\begin{align*}
{B}_n(g)(t)=\frac{1}{\sqrt{n}}\left(g\left(\phi_1\right)+\ldots+g\left(\phi_{\lceil nt\rceil }\right)\right),\;\;\;0\leq t<1,\;\;\;
\text{and}\;\;\;{B}_n(g)(1)={B}_n(g)(1-)
\end{align*}
for every $n\in\mathbb{N}$, 
where $\lceil a\rceil$ is the ceiling function of $a\in\mathbb{R}$. 

For any given Borel function $g:X\to\mathbb{R}$ such that $\langle g^{r},\mu_*\rangle<\infty$ for some $r>2$, we say that $(g(\phi_n))_{n\in\mathbb{N}_0}$ satisfies \emph{the Donsker invariance principle for the CLT} (the functional CLT), if $\sigma^2(\bar{g})<\infty$ 
and ${B}_n(\bar{g})$ converges weakly to $\sigma(\bar{g})B$ in the space $D[0,1]$, as $n\to\infty$, where $B$ is a~standard Brownian motion on $\left[0,1\right]$.

\subsection{An Auxiliary Result}
An important step in the proof of our main result (given in Section \ref{subsec_CLT}) follows from \linebreak\hbox{\cite[Corollary~1]{mw}} and \cite[Corollary~4]{mw}. For the convenience of the reader, we summarize their key statements in the lemma below.

Let us note that, if $g:X\to\mathbb{R}$ is integrable with respect to $\mu\in\mathcal{M}_{fin}(X)$, then $Ug(x)$, given by \eqref{def:dual_op}, is well-defined and finite for $\mu$-almost all $x\in X$. Therefore, for every $n\in\mathbb{N}$ and any Borel function $g:X\to\mathbb{R}$, we can define
\begin{align*}
\mathcal{V}_n\,g=\sum_{k=1}^{n}U^kg.
\end{align*}

\begin{lemma}\label{thm:MW}
Let $(\phi_n^*)_{n\in\mathbb{N}_0}$ be an $X$-valued time-homogeneous Markov chain with transition law $\Pi$, which posseses a unique invariant distribution \hbox{$\mu_*\in \mathcal{M}_1(X)$}. Further, suppose that $g:X\to\mathbb{R}$ is a~Borel function satisfying $\langle g^2,\mu_*\rangle<\infty$, for which there exist $\alpha<1/2$, $n_0\in\n$ and $c>0$ such that
\begin{align}\label{cond_MW}
\left\langle (\mathcal{V}_n\,\bar{g})^2,\,\mu_*\right\rangle^{1/2}\leq c n^{\alpha}\;\;\;\mbox{for all}\;\;\;n\geq n_0.
\end{align}
Then, assuming that $\mu_*$ is the initial distribution of $(\phi_n^*)_{n\in\mathbb{N}_0}$, the following statements are fulfilled:
\begin{itemize}
\item[(a)] \phantomsection\label{cnd:a}
The CLT holds for $\left({g}(\phi_n^*)\right)_{n\in\mathbb{N}_0}$.
\item[(b)] \phantomsection\label{cnd:b} 
If, additionally, $\langle |g|^r,\mu_*\rangle<\infty$ for some $r>2$, then $\left({g}(\phi_n^*)\right)_{n\in\mathbb{N}_0}$ obeys the Donsker invariance principle for the CLT.
\end{itemize}
\end{lemma}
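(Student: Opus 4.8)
The statement collects \cite[Corollary~1]{mw} and \cite[Corollary~4]{mw} in the form we need, so the plan is simply to check that their standing hypotheses are met and that \eqref{cond_MW} implies the summability condition used in \cite{mw}. First I would record that, since $\mu_*$ is the \emph{unique} invariant probability measure of $P$, it is an extreme point of the (one-element) convex set of invariant probability measures and hence ergodic; consequently the stationary sequence $(\phi_n^*)_{n\in\mathbb{N}_0}$ with initial law $\mu_*$ is stationary and ergodic, which is the framework in which the Maxwell--Woodroofe theorems are stated. I would also note that $\bar g=g-\langle g,\mu_*\rangle$ is an admissible integrand: $\langle\bar g,\mu_*\rangle=0$ and $\langle\bar g^{2},\mu_*\rangle<\infty$ because $\langle g^{2},\mu_*\rangle<\infty$.

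The key translation is as follows. By the Markov property, for $\mu_*$-almost every $x\in X$ and every $n\in\mathbb{N}$ one has $\mathbb{E}_{\mu_*}\big(\bar g(\phi_1^*)+\dots+\bar g(\phi_n^*)\mid\phi_0^*=x\big)=\sum_{k=1}^{n}U^k\bar g(x)=\mathcal{V}_n\bar g(x)$, so the $L^2(\mu_*)$-norm in $x$ of the conditional expectation of the $n$-th partial sum equals exactly $\langle(\mathcal{V}_n\bar g)^2,\mu_*\rangle^{1/2}$, which is the quantity controlled in the Maxwell--Woodroofe summability condition $\sum_{n\ge1}n^{-3/2}\langle(\mathcal{V}_n\bar g)^2,\mu_*\rangle^{1/2}<\infty$. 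To check this series I would split it at $n_0$: the finitely many terms with $n<n_0$ contribute a finite amount, while for $n\ge n_0$ the hypothesis \eqref{cond_MW} gives $\langle(\mathcal{V}_n\bar g)^2,\mu_*\rangle^{1/2}\le c\,n^{\alpha}$, so the tail is dominated by $c\sum_{n\ge n_0}n^{\alpha-3/2}$, which converges because $\alpha<1/2$ forces $\alpha-3/2<-1$.

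Granting this, part \hyperref[cnd:a]{(a)} is immediate from \cite[Corollary~1]{mw}: the martingale approximation there shows that $\sigma^2(\bar g)=\lim_{n\to\infty}\mathbb{E}_{\mu_*}(s_n^2(\bar g))$ exists and is finite, so that \eqref{def:sigma} is meaningful for $\bar g$, and that $\Phi s_n(\bar g)$ converges weakly to $\mathcal{N}(0,\sigma^2(\bar g))$. For part \hyperref[cnd:b]{(b)}, the extra assumption $\langle|g|^r,\mu_*\rangle<\infty$ for some $r>2$ yields $\langle|\bar g|^r,\mu_*\rangle<\infty$ by Jensen's inequality, and together with the summability condition already verified this is precisely the hypothesis of \cite[Corollary~4]{mw}, which gives weak convergence of $B_n(\bar g)$ to $\sigma(\bar g)B$ in $D[0,1]$. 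There is no genuine obstacle here; the only points that require care are the bookkeeping identification of $\langle(\mathcal{V}_n\bar g)^2,\mu_*\rangle^{1/2}$ with the conditional-expectation $L^2$-norm appearing in \cite{mw}, and the observation that uniqueness of the invariant measure is exactly what supplies the ergodicity those results require.
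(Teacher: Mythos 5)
Your proposal is correct and matches the paper's treatment: the paper does not prove this lemma but simply quotes it as a summary of \cite[Corollary 1]{mw} and \cite[Corollary 4]{mw}, and your verification (ergodicity of the stationary chain from uniqueness of $\mu_*$, the identification of $\mathcal{V}_n\bar g$ with the conditional expectation of the partial sums, and the summability $\sum_n n^{\alpha-3/2}<\infty$ for $\alpha<1/2$) is exactly the routine checking implicit in that citation. No gap.
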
 
Note that, while formulating this lemma, we have already taken into account the fact that, in the case of Polish spaces (which is the one that we consider here), the convergence in the Prokhorov metric (see e.g. \cite{bill} or \cite{mw} for the definition) is equivalent to the weak convergence of probability measures.

\subsection{The Main result}\label{subsec_CLT}

Before we formulate the main theorem of this section, we need to strengthen condition \hyperref[cnd:B1]{(B1)} to the following form:
\begin{itemize}
\item[(B1)$^\prime$]\label{cnd:B1p}\phantomsection
There exist a Lyapunov function $V:X\to[0,\infty)$ and constants $a\in (0,1)$ and \hbox{$b\in(0,\infty)$} such that
\[UV^2(x)\leq \left(aV(x)+b\right)^2\quad\text{for every}\quad x\in X.\]
\end{itemize}
Obviously, due to the H\"older inequality, hypothesis \hyperref[cnd:B1p]{(B1)$^{\prime}$} implies \hyperref[cnd:B1]{(B1)}. Indeed,
\begin{align*}
UV(x)=\left\langle V,P\delta_x\right\rangle\leq\left\langle V^2,P\delta_x\right\rangle^{1/2}=\left(UV^2(x)\right)^{1/2}\leq aV(x)+b\;\;\;\mbox{for any}\;\;\;x\in X.
\end{align*}

\begin{theorem}\label{thm:CTG}
Suppose that $\Pi:X\times\mathcal{B}_X\to[0,1]$  is a transition probability function such that conditions \hyperref[cnd:B0]{(B0)}-\hyperref[cnd:B5]{(B5)} with \hyperref[cnd:B1]{(B1)} strengthened to \hyperref[cnd:B1p]{(B1)$^{\prime}$} hold with some substochastic kernel \linebreak\hbox{$Q:X^2\times\mathcal{B}_{X^2}\to[0,1]$} satisfying~(\ref{def:substoch}). Further, let $(\phi_n)_{n\in\mathbb{N}_0}$ be an $X$-valued time-homogeneous Markov chain with transition law given by $\Pi$ and initial distribution $\mu\in\mathcal{M}_{1,1}^V(X)$, where $V$ is a~Lyapunov function appearing in \hyperref[cnd:B1p]{(B1)$^{\prime}$}. Then the CLT holds for $\left(g(\phi_n)\right)_{n\in\mathbb{N}_0}$ whenever $g\in Lip_b(X)$.  Moreover, if the initial distribution of $(\phi_n^*)_{n\in\mathbb{N}_0}$ is equal to the unique invariant measure of $P$, then $(g(\phi_n^*))_{n\in\mathbb{N}_0}$ obeys the Donsker invariance principle for the CLT.
\end{theorem}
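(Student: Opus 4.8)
The plan is to verify the Maxwell--Woodroofe condition \eqref{cond_MW} for $\bar g$ with exponent $\alpha=0$, apply Lemma \ref{thm:MW} to the chain started from the invariant measure, and then remove the stationarity restriction by a coupling argument based on Lemma \ref{cor:g_useful}. I begin with the necessary moment bounds. By Theorem \ref{thm:spectral_gap}, $P$ admits a unique invariant measure $\mu_*\in\mathcal{M}_{1,1}^V(X)$, so $\langle V,\mu_*\rangle<\infty$; since $g\in Lip_b(X)$ is bounded, $\langle|g|^r,\mu_*\rangle<\infty$ for every $r$, whence the moment hypotheses of Lemma \ref{thm:MW} are automatic. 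To upgrade the integrability of $\mu_*$, observe that \hyperref[cnd:B1]{(B1)} yields $\langle V,P^n\delta_x\rangle\le V(x)+b/(1-a)$, while \hyperref[cnd:B1p]{(B1)$^{\prime}$} yields $\langle V^2,P^{n+1}\delta_x\rangle=\langle UV^2,P^n\delta_x\rangle\le a^2\langle V^2,P^n\delta_x\rangle+2ab\langle V,P^n\delta_x\rangle+b^2$, so that $\sup_n\langle V^2,P^n\delta_x\rangle<\infty$ for each $x$ by iteration. Since $P^n\delta_x\to\mu_*$ in $d_{FM}$ (i.e.\ weakly) and $V^2$ is continuous and non-negative, Fatou's lemma for weak convergence (or truncation of $V^2$ by $N^2$ followed by $N\to\infty$) gives $\langle V^2,\mu_*\rangle<\infty$.

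The main step is a uniform bound on $\mathcal{V}_n\bar g$. Because $U$ fixes constants and $\langle g,\mu_*\rangle=\langle U^kg,\mu_*\rangle$, one has $U^k\bar g(x)=\int_X\bigl(U^kg(x)-U^kg(y)\bigr)\,\mu_*(dy)$, and, since the coupling $(\phi^{(1)}_n,\phi^{(2)}_n)_n$ from \hyperref[cnd:B5]{(B5)} has both marginals governed by $\Pi$, $|U^kg(x)-U^kg(y)|=\bigl|\mathbb{E}_{x,y}\bigl[g(\phi^{(1)}_k)-g(\phi^{(2)}_k)\bigr]\bigr|\le\mathbb{E}_{x,y}\bigl|g(\phi^{(1)}_k)-g(\phi^{(2)}_k)\bigr|$. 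Lemma \ref{cor:g_useful} then gives $|U^k\bar g(x)|\le c\|g\|_{BL}q^k(1+V(x)+\langle V,\mu_*\rangle)$, so summing the geometric series, $|\mathcal{V}_n\bar g(x)|\le\tfrac{cq}{1-q}\|g\|_{BL}(1+V(x)+\langle V,\mu_*\rangle)$ for all $n$. Squaring, integrating against $\mu_*$, and using $\langle V^2,\mu_*\rangle<\infty$ shows that $\langle(\mathcal{V}_n\bar g)^2,\mu_*\rangle^{1/2}$ is bounded by a constant independent of $n$, i.e.\ \eqref{cond_MW} holds with $\alpha=0$ and $n_0=1$. Lemma \ref{thm:MW}(a) now yields the CLT for the chain started from $\mu_*$ (in particular $\sigma^2(\bar g)<\infty$), and Lemma \ref{thm:MW}(b), applicable since $g$ is bounded, yields the Donsker invariance principle for that chain --- which already settles the last assertion of the theorem.

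To treat an arbitrary initial law $\mu\in\mathcal{M}_{1,1}^V(X)$, I would run the coupling $(\phi^{(1)}_n,\phi^{(2)}_n)_n$ of $\Pi$ with initial distribution $\mu\otimes\mu_*$; then $(\phi^{(1)}_n)_n$ is, as a process, the chain with initial law $\mu$, while $(\phi^{(2)}_n)_n$ is the stationary chain. Writing $S_n^{(i)}=n^{-1/2}\sum_{k=1}^n\bar g(\phi^{(i)}_k)$, Lemma \ref{cor:g_useful} integrated against $\mu\otimes\mu_*$ gives $\mathbb{E}\bigl|S_n^{(1)}-S_n^{(2)}\bigr|\le n^{-1/2}\sum_{k=1}^n c\|g\|_{BL}q^k(1+\langle V,\mu\rangle+\langle V,\mu_*\rangle)\to0$, hence $S_n^{(1)}-S_n^{(2)}\to0$ in probability. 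Since $S_n^{(1)}\stackrel{d}{=}s_n(\bar g)$ under $\mathbb{P}_\mu$ and $S_n^{(2)}$ converges in law to $\mathcal{N}(0,\sigma^2(\bar g))$ by the stationary case, Slutsky's lemma gives $\Phi s_n(\bar g)\Rightarrow\mathcal{N}(0,\sigma^2(\bar g))$, which completes the proof.

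I expect the crux to be the verification of \eqref{cond_MW}: granted Lemma \ref{cor:g_useful} the argument is short, but it genuinely relies on $\langle V^2,\mu_*\rangle<\infty$, which is exactly why \hyperref[cnd:B1]{(B1)} must be reinforced to \hyperref[cnd:B1p]{(B1)$^{\prime}$}; the only other point needing care is the measure-theoretic check that the first coordinate of the coupling started from $\mu\otimes\mu_*$ is, as a process, the Markov chain with transition $\Pi$ and initial law $\mu$.
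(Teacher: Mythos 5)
Your proposal is correct and follows essentially the same route as the paper's proof: first $\langle V^2,\mu_*\rangle<\infty$ is deduced from \hyperref[cnd:B1p]{(B1)$^{\prime}$} together with the weak convergence $P^n\delta_x\to\mu_*$, then the Maxwell--Woodroofe condition \eqref{cond_MW} is verified with an $n$-independent bound via Lemma \ref{cor:g_useful}, and finally the stationary case is transferred to an arbitrary initial law $\mu\in\mathcal{M}_{1,1}^V(X)$ through the coupling estimate $n^{-1/2}\sum_{k=1}^n\mathbb{E}\left|g\left(\phi^{(1)}_k\right)-g\left(\phi^{(2)}_k\right)\right|=O\left(n^{-1/2}\right)$. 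The only (cosmetic) difference is in this last step, where you conclude by Slutsky's lemma for the coupled partial sums started from $\mu\otimes\mu_*$, whereas the paper tests the laws $\Phi s_n^{\delta_x}(\bar g)$, $\Phi s_n^{\delta_y}(\bar g)$ against bounded Lipschitz functions and invokes dominated convergence --- both arguments rest on exactly the same estimate from Lemma \ref{cor:g_useful}.
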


\begin{proof}
The proof proceeds in three steps.
\newline
{\textbf{Step I. }} First of all, note that Theorem \ref{thm:spectral_gap} provides the existence of a unique invariant measure $\mu_*\in\mathcal{M}_1(X)$ for $P$. 
We need to show that $\mu_*\in \mathcal{M}_{1,2}^V(X)$. To do this, fix an arbitrary $x\in X$, and observe that condition \hyperref[cnd:B1p]{(B1)$^{\prime}$} yields
\begin{align}\label{estim}
\begin{aligned}
U^nV^2(x)
&\leq a^{2n}V^2(x)+2a^nbV(x)\sum_{i=0}^{n-1}a^i+b^2\sum_{i=0}^{n-1}a^{2i}+2ab^2\sum_{i=0}^{n-1}a^i\\
&\leq a^{2n}V^2(x)+a^n\frac{2b}{1-a}V(x)+\frac{b^2}{1-a^2}+\frac{2ab^2}{1-a},
\end{aligned}
\end{align}
which may be easily proven inductively on $n\in\mathbb{N}$. 
Further, for every $k\in\mathbb{N}$, define $\widetilde{V}_k:X\to[0,k]$ by $\widetilde{V}_k=\min(k,V^2)$.
Clearly $\widetilde{V}_k\in C_b(X)$ for each $k\in\mathbb{N}$. Hence, referring to Theorem~\ref{thm:spectral_gap}, we have 
\begin{align}\label{eq:thm5.1}
\langle \widetilde{V}_k,\mu_*\rangle=\lim_{n\to\infty}\langle \widetilde{V}_k,P^n\delta_x\rangle=\lim_{n\to\infty}U^n\widetilde{V}_k(x)\quad\text{for every}\quad k\in\mathbb{N}.
\end{align}
Now, observe that $(\widetilde{V}_k)_{k\in\mathbb{N}}$ is a non-decreasing sequence of non-negative functions satisfying \linebreak$\lim_{k\to\infty}\widetilde{V}_k(y)={V}^2(y)$ for all $y\in X$. Therefore, using the Monotone Convergence Theorem, together with (\ref{eq:thm5.1}) and (\ref{estim}), we obtain
\begin{align*}
\left\langle V^2,\mu_*\right\rangle
=\lim_{k\to\infty}\left\langle \widetilde{V}_k,\mu_*\right\rangle
=\lim_{k\to\infty}\lim_{n\to\infty}U^n\widetilde{V}_k(x)
\leq\limsup\limits_{n\to\infty} U^n V^2(x)
\leq b^2\frac{2a+2a^2+1}{1-a^2},
\end{align*}
which implies that, indeed, $\mu_*\in \mathcal{M}_{1,2}^V(X)$.\newline
{\textbf{Step II. } }  
Let $g\in Lip_b(X)$. Then, obviously, $\langle |g|^{r},\mu_*\rangle<\infty$ for any $r\geq 2$. In order to apply both parts \hyperref[cnd:a]{(a)} and \hyperref[cnd:b]{(b)} of Lemma \ref{thm:MW}, we only need to verify condition (\ref{cond_MW}). It is therefore enough to find some upper bound, independent of $n\in\mathbb{N}$, for the expression
\begin{align*}
\left\langle (\mathcal{V}_n\,\bar{g})^2,\mu_*\right\rangle
=\int_X\left(\sum_{k=1}^{n}U^k\bar{g}(x)\right)^2\mu_*(dx).
\end{align*}
Observe that Lemma \ref{cor:g_useful} immediately implies that there exist $q\in(0,1)$ and $c>0$ such that, for any $x,y\in X$,
\begin{align*}
|U^k g(x) - U^k g(y)|&=\left|\left\langle g, \Pi^n(x,\cdot)-\Pi^n(y,\cdot)\right\rangle\right|\\
&= \left|\int_{X^2}g(u)\,\mathbb{C}^n_{x,y}(du\times dv)-\int_{X^2}g(v)\,\mathbb{C}^n_{x,y}(du\times dv)\right|\\
&\leq \int_{X^2}|g(u)-g(v)|\,\mathbb{C}^n_{x,y}(du\times dv)\leq c\norma{g}_{BL}q^n(1+V(x)+V(y)).
\end{align*}
Hence, recalling that $\mu_*$ is invariant for $P$, we obtain
\begin{align*}
\left|\sum_{k=1}^{n} U^k\bar{g}(x)\right|
&\leq\sum_{k=1}^{n}\left|U^k g(x)-\<U^k g,\mu_* \>\right|
\leq\sum_{k=1}^{n}\int_X|U^k g(x)-U^k g(y)|\,\mu_*(dy)\\
&\leq \sum_{k=1}^{n}\int_X c\norma{g}_{BL}q^k(1+V(x)+V(y)) \mu_*(dy)
\leq \frac{qc\norma{g}_{BL}}{1-q}(1+V(x)+\langle V,\mu_*\rangle),
\end{align*}
for any $x\in X$ and any $n\in\mathbb{N}$. 
Now, since $\mu_*\in \mathcal{M}_{1,2}^V(X)$, which, due to the H\"older inequality, entails $\mu_*\in \mathcal{M}_{1,1}^V(X)$, we have
\begin{align}\label{eq:indep_of_n}
\begin{aligned}
\left\langle \left(\mathcal{V}_n\,\bar{g}\right)^2,\mu_*\right\rangle
&\leq \frac{q^2c^2\norma{g}_{BL}^2}{(1-q)^2}\int_X\left(1+\langle V,\mu_*\rangle+ V(x)\right)^2\,\mu_*(dx)\\
&\leq2 \frac{q^2c^2\norma{g}_{BL}^2}{(1-q)^2}\left(\Big(1+\langle V,\mu_*\rangle\Big)^2+\left\langle V^2,\mu_*\right\rangle\right)<\infty.
\end{aligned}
\end{align}
Since the above estimation is independent of $n\in\mathbb{N}$, condition (\ref{cond_MW}) is satisfied. Hence, Lemma \ref{thm:MW} implies both the CLT and its functional version for the Markov chain $(g(\phi_n^*))_{n\in\mathbb{N}_0}$, whenever $\mu_*$ is the distribution of $\phi_0$ (i.e the chain is stationary). \newline
\textbf{{Step III. } } Let us now establish the assertion of Theorem \ref{thm:CTG} for a non-stationary Markov chain. For the clarity of the arguments, let us indicate the initial distribution of $(\phi_n)_{n\in\mathbb{N}_0}$ in the upper index of $s_n(\bar{g})$, $n\in\mathbb{N}_0$.  
According to statement \hyperref[cnd:a]{(a)} of Lemma \ref{thm:MW} (whose hypothesis has already been verified in Step II), we know that $\Phi s_n^{\mu_*}(\bar{g})$ converges weakly to $\mathcal{N}(0,\sigma^2(\bar{g}))$, as $n\to\infty$. Therefore it is enough to prove that
\begin{align}\label{eq:enough1}
\lim_{n\to\infty}\left|\left\langle f, \Phi s_n^{\delta_x}(\bar{g})-\Phi s_n^{\delta_y}(\bar{g})\right\rangle\right|=0\;\;\;\text{for every}\;\;\; f\in \fmf\;\;\;\text{and any}\;\;\;x,y\in X.
\end{align}
Indeed, equality (\ref{eq:enough1}), together with the Dominated Convergence Theorem, implies that 
\begin{align}\label{eq:enough2}
\lim_{n\to\infty}\left|\left\langle f,\Phi s_n^{\mu}(\bar{g})\right\rangle -\left\langle f, \Phi s_n^{\mu_*}(\bar{g})\right\rangle\right|=0\quad \text{for any}\quad f\in \fmf.
\end{align}
Since $\Phi s_n^{\mu_*}(\bar{g})$ converges weakly to $\mathcal{N}(0,\sigma^2(\bar{g}))$, as $n\to\infty$, we in particular obtain
\[\lim_{n\to\infty}\left|\left\langle f,\Phi s_n^{\mu_*}\left(\bar{g}\right)\right\rangle-\left\langle f,\mathcal{N}\left(0,\sigma^2(\bar{g})\right)\right\rangle\right|=0\quad\text{for any}\quad f\in \fmf.\]
This, together with \eqref{eq:enough2}, gives
$$\lim_{n\to\infty}\left|\left\langle f, \Phi s_n^{\mu} (\bar{g})\right\rangle - \left\langle f, \mathcal{N}\left(0, \sigma^2(\bar{g})\right)\right\rangle\right| = 0\quad\text{for any}\quad f\in  \fmf,
$$
According to the fact that, in the case of Polish spaces, the convergence in $d_{FM}$ is equivalent to the weak convergence of probability measures, we obtain the desired conclusion.

It now remains to prove \eqref{eq:enough1}. Let $f\in \fmf$ be arbitrary. By virtue of Lemma \ref{cor:g_useful}, we have 
\begin{align*}
&\left|\left\langle f, \Phi s_n^{\delta_x}(\bar{g}) -\Phi s_n^{\delta_y}(\bar{g}) \right\rangle\right|=\Bigg|\int_{X^n}f\left(\frac{\bar{g}(u_1)+\ldots+\bar{g}(u_n)}{\sqrt{n}}\right)\,\mathbb{P}^{1,\ldots,n}_x(du_1\times\ldots\times du_n)\\
&\quad\,-\int_{X^n}f\left(\frac{\bar{g}(v_1)+\ldots+\bar{g}(v_n)}{\sqrt{n}}\right)\,\mathbb{P}^{1,\ldots,n}_y(dv_1\times\ldots\times dv_n)\Bigg|\\
&\leq\int_{X^{2n}}\left|\frac{g(u_1)+\ldots+g(u_n)}{\sqrt{n}}-\frac{g(v_1)+\ldots+g(v_n)}{\sqrt{n}}\right|\,
{\mathbb{C}}_{x,y}^{1,\ldots,n}\left(du_1\times dv_1\times\ldots\times du_n\times dv_n\right)\\
&\leq \frac{1}{\sqrt{n}}\sum_{i=1}^n\int_{X^2}|g(u)-g(v)|\,\mathbb{C}^i_{x,y}(du\times dv)
\leq \frac{c\norma{g}_{BL}}{\sqrt{n}(1-q)}(1+V(x)+V(y)),
\end{align*}
for some $q\in(0,1)$ and $c>0$. Hence, (\ref{eq:enough1}) and (\ref{eq:enough2}) are established and the proof of the CLT (for non-stationary Markov chains) is now completed. 
\end{proof}

\begin{remark}
Analyzing the proof of Theorem \ref{thm:CTG} shows that its assertion remains valid under two more general (and simultaneously, much more abstract) hypotheses, namely:
\begin{itemize}
\item[(i)] condition \hyperref[cnd:B1p]{(B1)$^{\prime}$} is fulfilled;
\item[(ii)] there exists a Markovian coupling $(\phi^{(1)}_n,\phi^{(2)}_n)_{n\in\mathbb{N}_0}$ of $\Pi$ for which condition \eqref{eq:lemma_thesis} is satisfied.
\end{itemize}
If we now compare \hyperref[cnd:B1p]{(B1)$^{\prime}$} and \eqref{eq:lemma_thesis} with the assumptions of \cite[Theorem 5.1]{ghsz}, we see that none of the results need not imply the other.
\end{remark}

\section{An Abstract Markov Model for Gene Expression}\label{sec:ex}

Within this section we indicate the usefulness of Theorem \ref{thm:CTG}. For this reason we refer to an~abstract model, which occurs mainly in gene expression analysis (cf. \cite{hhs,dawid,mtky}). Such a model has already been investigated in terms of 
its exponential ergodicity and the strong law of large numbers in \cite{asia, dawid}.

\subsection{The Structure and Assumptions of the Model}\label{sec:model}

Consider a separable Banach space $(H,\|\cdot\|)$ and a~closed subset $Y$ of $H$. By $B(h,r)$, where $h\in H$ and $r>0$, we will denote  the open ball in $H$ of radius $r$ centered at $h$. Further, assume that $(\Theta,\mathcal{B}(\Theta),\Delta)$ is a~topological measure space with a $\sigma$-finite Borel mesure $\Delta$. For simplicity, in the rest of the paper, we will write $d\theta$ instead of $\Delta(d\theta)$. Moreover, fix $N\in\n$, and let $I:=\{1,\ldots,N\}$ be endowed with the discrete metric $(i,j)\mapsto {d}(i,j)$, that is, ${d}(i,j)=1$ for $i\neq j$ and ${d}(i,j)=0$ for $i=j$. 

We are concerned with a random dynamical system $(Y(t))_{t\in\mathbb{R}_+}$ evolving through random jumps on the space $Y$.  It is required that the jumps occur at random moments $\tau_n$, \hbox{$n\in\mathbb{N}$}, coinciding with the jump times of a Poisson process with intensity $\lambda$. Between the jumps the system evolves deterministically. It is driven by a finite number of semiflows \hbox{$S_i:\mathbb{R}_+\times Y\to Y$}, $i\in I$, which are assumed to be continuous with respect to each variable. These semiflows are switched from jump to jump, according to a~matrix of continuous functions \hbox{$\pi_{ij}:Y\to\left[0,1\right]$}, $i,j\in I$, satisfying
$\sum_{j\in I}\pi_{ij}(y)=1$ for any $y\in Y,\,i\in I$. More formally, we have
\[Y(t)=S_{\xi_n}\left(t-\tau_n,Y\left(\tau_n\right)\right)\;\;\; \text{for}\;\;\; t\in[\tau_n,\tau_{n+1}),\]
where $\xi_n$ is an $I$-valued random variable  describing the choice of a semiflow directly after the $n$-th jump.

For $n\in\mathbb{N}$, the post-jump location $Y(\tau_n)$ is a result of a transformation of the state $Y(\tau_n-)$ just before the jump, determined by a function randomly selected among all possible ones \hbox{$w_{\theta}:Y\to Y$}, $\theta\in\Theta$, additionally perturbed by a random shift $H_n$ within an \hbox{$\varepsilon$-neighbourhood}. In other words, we have $Y(\tau_n)=w_{\theta_n}(Y(\tau_n-))+H_n$. 

It is required that all the maps $(y,\theta)\mapsto w_{\theta}(y)$ are continuous, and also that there exists  $\varepsilon^*>0$ for which 
\[w_{\theta}(y)+h\in Y\;\;\; \text{whenever} \;\;\; h\in B(0,\varepsilon^*),\;\;\; \theta \in \Theta,\;\;\;y\in Y.\] 
We further require that all the disturbances $H_n$ have a common disribution \hbox{$\nu^{\varepsilon}\in\mathcal{M}_1(H)$}, supported on the ball $B(0,\varepsilon)$, where $\varepsilon\in\left[0,\varepsilon^*\right]$ (in the case where $\varepsilon=0$, we set \hbox{$B(0,\varepsilon)=\{0\}$}).

Moreover, we will  assume that the probabilities of choosing $w_{\theta}$ (at the jump times) are determined by the place-dependent density functions $\theta\mapsto p(y,\theta)$ with $y\in Y$, where \hbox{$p: Y\times \Theta \to \left[0,\infty\right)$} is a continuous function such that $\int_{\Theta} p(y,\theta)\,d\theta=1$ for any $y\in Y$.  

In the analysis that follows, we will focus on the sequence of random variables $(Y_n)_{n\in\mathbb{N}_0}$ given by the post-jump locations of $(Y(t))_{t\in\mathbb{R}_+}$, that is, $Y_n=Y(\tau_n)$ for $n\in\mathbb{N}$. Such a~sequence can be defined on a~suitable probability space, say $(\Omega, \mathcal{F}, \mathbb{P})$, by
\begin{equation}\label{def:Y_n}
Y_{n+1}=w_{\theta_{n+1}}(S_{\xi_{n}}(\Delta \tau_{n+1},Y_{n}))+H_{n+1}\;\;\;\text{for}\;\;\;n\in\mathbb{N}_0,
\end{equation}
where the above variables and their distributions    
are specified by the following conditions:
\begin{itemize}
\item[(i)] $Y_0:\Omega\to Y$ and $\xi_0:\Omega\to I$ have arbitrary and fixed  distributions. 
\item[(ii)] $(\tau_n)_{n\in\mathbb{N}_0}$ is a strictly increasing sequence of random variables $\tau_n :\Omega\to \left[0,\infty\right)$, $n\in \mathbb{N}_0$, such that $\tau_0=0$ and $\tau_n\to\infty$, as $n\to\infty$. Moreover, the increments $\Delta \tau_{n+1}:=\tau_{n+1}-\tau_{n}$ are mutually independent and have the common exponential distribution with intensity $\lambda>0$.
\item[(iii)] $H_n:\Omega\to Y$, $n\in\mathbb{N}$, are identically distributed with $\nu^{\varepsilon}$.
\item[(iv)] $\theta_n :\Omega\to \Theta$ and $\xi_n :\Omega\to I$, $n\in \mathbb{N}$, are defined (inductively) in the following way:
\begin{align*}
&\mathbb{P}(\theta_{n+1}\in D\;|\;S_{\xi_{n}}(\Delta \tau_{n+1},Y_{n})=y;\,W_n)
=\int_{D} p(y,\theta)\,d\theta 
\;\;\;\text{for}\;\;\; D\in\mathcal{B}(\Theta),\; y\in Y,\;n\in\mathbb{N}_0,\\
&\mathbb{P}(\xi_{n+1}=j\;|\;Y_{n+1}=y,\, \xi_{n}=i;\,W_n)=\pi_{ij}(y) 
\;\;\;\text{for}\;\;\; y\in Y,\;i, j\in I,\;n\in\mathbb{N}_0,
\end{align*}
where $W_0=(Y_0,\;\xi_0)$ and $W_n=(W_0,\;H_1,\ldots,H_n,\;\tau_1,\ldots,\tau_n,\;\theta_1,\ldots,\theta_n,\;\xi_1,\ldots,\xi_n)$ for $n\in\mathbb{N}$.
\end{itemize}
Simultaneously, we require that, for any $n\in\mathbb{N}_0$, the variables $\Delta\tau_{n+1}$, $H_{n+1}$, $\theta_{n+1}$ and $\xi_{n+1}$ are ({mutually}) conditionally independent given $W_n$, and that $\Delta\tau_{n+1}$ and $H_{n+1}$ are independent of $W_n$.

Moreover, we impose the following assumptions, adapted from \cite{dawid}:
\begin{itemize}
\item[(A1)] \phantomsection\label{cnd:A1}
There exists $\bar{y}\in Y$ such that
$$\sup_{y\in Y}\int_0^{\infty}e^{-\lambda t}\int_{\Theta}\|{ w_{\theta}(S_i(t,\bar{y}))-\bar{y}}\|p(S_i(t,y),\theta)\,d\theta\,dt<\infty\;\;\; \text{for every}\;\;\; i\in I.$$
\item[(A2)] \phantomsection\label{cnd:A2}
There exist $\alpha\in(-\infty,\lambda)$, $L>0$ and some function 
\hbox{$\mathcal{L}:Y\to \mathbb{R}_+$}, bounded on bounded sets, such that
$$\|{S_i(t,y_1)-S_j(t,y_2)}\|\leq Le^{\alpha t}\|{y_1-y_2}\|+t\,\mathcal{L}({y_2})\,{d}(i,j)\;\;\; \text{for} \;\;\;t\geq 0,\; y_1,y_2\in Y,\; i,j\in I.$$
\item[(A3)] \phantomsection\label{cnd:A3}
There exists a~constant $L_w>0$ such that
$$\int_{\Theta} \|{w_{\theta}(y_1)-w_{\theta}(y_2)}\|p(y_1,\theta)\,d\theta\leq L_w\|{y_1-y_2}\|\;\;\;\text{for}\;\;\;y_1,y_2\in Y.$$%
\item[(A4)] \phantomsection\label{cnd:A4}
There exist $L_{\pi}>0$ and $L_p>0$ such that, for any $y_1,y_2\in Y$, $i\in I$,
$$\sum_{j\in I} |\pi_{ij}(y_1)-\pi_{ij}(y_2)|\leq L_{\pi} \|{y_1-y_2}\|
\;\;\;\text{and}\;\;\;
\int_{\Theta} |p(y_1,\theta)-p(y_2,\theta)|\,d\theta\leq L_{p} \|{y_1-y_2}\|.$$
\item[(A5)] \phantomsection\label{cnd:A5}
There exist $d_{\pi}>0$ and $d_{p}>0$ such that, for all $i_1,i_2\in I$, $y_1,y_2\in Y$,
$$\sum_{j\in I} \min\{\pi_{i_1,j}(y_1),\pi_{i_2,j}(y_2)\}\geq d_{\pi}
\;\;\;\text{and}\;\;\;
\int_{\Theta(y_1,y_2)}\min\{p(y_1,\theta),p(y_2,\theta)\}\,d\theta\geq d_p,$$
where 
$\Theta(y_1,y_2)=\{\theta\in\Theta:\, \|{w_{\theta}(y_1)-w_{\theta}(y_2)}\|\leq L_w \|{y_1-y_2}\|\}$.
\end{itemize}
In addition to this, we also assume that the constants appearing in conditions \hyperref[cnd:A2]{(A2)} and \hyperref[cnd:A3]{(A3)} satisfy the inequality
\begin{equation}\label{eq:balance1} LL_w+\alpha/\lambda<1. \end{equation}

We further investigate the sequence $(Y_n,\xi_n)_{n\in\mathbb{N}_0}$ with values in $X=Y\times I$. The space $X$ is assumed to be equipped with the metric given by
$$\varrho_{\tilde{c}}\left((y_1,i),(y_2,j)\right)=\|y_1-y_2\|+\tilde{c}\,{d}(i,j)\quad\text{for}\quad (y_1,i),(y_2,j)\in X,$$
where $\tilde{c}$ is a sufficiently large constant (defined explictly in \cite{dawid}), depending on $\lambda$, $\bar{y}$, $\alpha$, $\mathcal{L}$,  \hbox{$L$ and $L_w$.}

An easy computation shows that  $(Y_n,\xi_n)_{n\in\mathbb{N}_0}$ is a time-homogeneous Markov chain with transition law $\Pi_{\varepsilon}:X\times\mathcal{B}(X)\to\left[0,1\right]$ given by
\begin{align}\label{def:Pi_epsilon}
\Pi_{\varepsilon}(y,i,&A)=\int_0^{\infty}\lambda e^{-\lambda t}\int_{\Theta}p(S_i(t,y),\theta) \nonumber\\
&\times \int_{B(0,\varepsilon)}\left(\sum_{j\in I}\mathbbm{1}_A(w_{\theta}(S_i(t,y))+h,j)\,\pi_{ij}(w_{\theta}(S_i(t,y))+h)\right) \,\nu^{\varepsilon}(dh)\,d\theta\,dt\quad 
\end{align}
for any $(y,i)\in X$ and any $A\in\mathcal{B}_X$.

From the proof of \cite[Theorem 4.1]{dawid} it follows that, if conditions \hyperref[cnd:A1]{(A1)}-\hyperref[cnd:A5]{(A5)} hold with constants satisfying \eqref{eq:balance1}, then
the hypotheses of Theorem \ref{thm:spectral_gap} are fulfilled for $\Pi_{\varepsilon}$ and a~suitable substochastic kernel $Q$, satisfying \eqref{def:substoch}. Consequently, the Markov operator  corresponding to $\Pi_{\varepsilon}$ is then exponentially ergodic in the Fortet-Mourier metric induced by $\rho_{\tilde{c}}$ with a sufficiently large $\tilde{c}$.

\subsection{An Application of Theorem \ref{thm:CTG}}
In order to prove the CLT for the Markov chain $(Y_n,\xi_n)_{n\in\n_0}$, introduced in Section \ref{sec:model}, we strengthen assumptions \hyperref[cnd:A1]{(A1)} and \hyperref[cnd:A3]{(A3)} to the following conditions:
\begin{itemize}
\item[(A1)$^{\prime}$] \phantomsection\label{cnd:A1p}
 There exists $\bar{y}\in Y$ such that
$$\sup_{y\in Y}\int_0^{\infty}e^{-\lambda t}\int_{\Theta}\|{ w_{\theta}(S_i(t,\bar{y}))-\bar{y}}\|^2p(S_i(t,y),\theta)\,d\theta\,dt<\infty\quad \text{for}\quad i\in I.$$
\item[(A3)$^{\prime}$] \phantomsection\label{cnd:A3p}
There exists a~constant $L_w'>0$ such that
$$\int_{\Theta} \|{w_{\theta}(y_1)-w_{\theta}(y_2)}\|^2p(y_1,\theta)\,d\theta\leq L_w'\|{y_1-y_2}\|^2\;\;\;\text{for}\;\;\;y_1,y_2\in Y.$$
\end{itemize}
Let us note that, due to the H\"older inequality, conditions \hyperref[cnd:A1p]{(A1)$^{\prime}$}, \hyperref[cnd:A3p]{(A3)$^{\prime}$} imply \hyperref[cnd:A1]{(A1)}, \hyperref[cnd:A3]{(A3)}, respectively, and \hyperref[cnd:A3]{(A3)} holds with $L_w := \sqrt{L_w'}$. 

In the remainder of this section we assume that $V:X\to[0,\infty)$ is the Lyapunov function given by  
\begin{align}
\label{def:V_lil}
V(y,i)=\|y-\bar{y}\|\quad\text{for every}\quad(y,i)\in X,
\end{align}
where $\bar{y}$ is determined by \hyperref[cnd:A1p]{(A1)$^{\prime}$}.

\begin{theorem}\label{thm_CLT}
Consider the model stated in Section \ref{sec:model}. In particular, 
let $(Y_n,\xi_n)_{n\in\mathbb{N}_0}$ be the 
Markov chain with transition law $\Pi_{\varepsilon}$, given by \eqref{def:Pi_epsilon}, and initial distribution 
$\mu\in\mathcal{M}_1(X)$.  
Further, assume that conditions \hyperref[cnd:A1]{(A1)}-\hyperref[cnd:A5]{(A5)} with \hyperref[cnd:A1]{(A1)} and \hyperref[cnd:A3]{(A3)} strengthened to \hyperref[cnd:A1p]{(A1)$^{\prime}$} and \hyperref[cnd:A3p]{(A3)$^{\prime}$}, respectively, hold with 
\begin{align}
\label{CLT_condition}
L^2L_w^{\prime}+2\alpha\lambda^{-1}<1.
\end{align} 
Then, for every $g\in Lip_b(X)$, the chain $(g(Y_n,\xi_n))_{n\in\mathbb{N}_0}$ obeys the CLT, whenever its initial measure $\mu$ is such that $\mu\in\mathcal{M}_{1,1}^{V}(X)$ for $V$ given by \eqref{def:V_lil}. Moreover, if $(Y_n,\xi_n)_{n\in\mathbb{N}_0}$ is stationary, then $(g(Y_n,\xi_n))_{n\in\mathbb{N}_0}$ enjoys the Donsker invariance principle of the CLT.
\end{theorem}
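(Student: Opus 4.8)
\medskip
\noindent\textbf{Proof plan.}
The strategy is to check that the transition kernel $\Pi_\varepsilon$ of \eqref{def:Pi_epsilon} satisfies the hypotheses of Theorem~\ref{thm:CTG} with the Lyapunov function $V$ of \eqref{def:V_lil}, and then to invoke that theorem directly. Most of the structural work is already available: by the H\"older inequality \hyperref[cnd:A1p]{(A1)$'$} and \hyperref[cnd:A3p]{(A3)$'$} imply \hyperref[cnd:A1]{(A1)} and \hyperref[cnd:A3]{(A3)} (with $L_w=\sqrt{L_w'}$), and \eqref{CLT_condition} forces $2\alpha/\lambda<1$ and hence $L^2L_w'<1-2\alpha/\lambda\le(1-\alpha/\lambda)^2$, that is $LL_w+\alpha/\lambda<1$, which is \eqref{eq:balance1}. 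Therefore the proof of \cite[Theorem~4.1]{dawid} applies and yields a substochastic kernel $Q$ obeying \eqref{def:substoch} for which \hyperref[cnd:B0]{(B0)} and \hyperref[cnd:B2]{(B2)}--\hyperref[cnd:B5]{(B5)} hold. Since \hyperref[cnd:B1p]{(B1)$'$} entails \hyperref[cnd:B1]{(B1)} with the \emph{same} constants (the short computation recorded just before Theorem~\ref{thm:CTG}), it remains only to establish the strengthened drift inequality \hyperref[cnd:B1p]{(B1)$'$} for $\Pi_\varepsilon$ and $V(y,i)=\|y-\bar y\|$; everything else is then handed to Theorem~\ref{thm:CTG}.

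\medskip
The heart of the argument is thus the verification of \hyperref[cnd:B1p]{(B1)$'$}, which I would carry out as follows. Fix $(y,i)\in X$ and write out $\int_X V^2(z)\,\Pi_\varepsilon(y,i,dz)$ from \eqref{def:Pi_epsilon}. Because $V^2(z,j)=\|z-\bar y\|^2$ does not depend on the index $j$ and $\sum_{j\in I}\pi_{ij}(\cdot)=1$, this reduces to the expectation of $\|w_\theta(S_i(t,y))+h-\bar y\|^2$ under the product \emph{probability} measure $\lambda e^{-\lambda t}\,dt\otimes p(S_i(t,y),\theta)\,d\theta\otimes\nu^\varepsilon(dh)$ on $\mathbb{R}_+\times\Theta\times B(0,\varepsilon)$. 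I would then apply Minkowski's inequality in the corresponding $L^2$-space to the pointwise triangle bound
\[
\|w_\theta(S_i(t,y))+h-\bar y\|\le\|w_\theta(S_i(t,y))-w_\theta(S_i(t,\bar y))\|+\|w_\theta(S_i(t,\bar y))-\bar y\|+\|h\|
\]
and estimate the three $L^2$-norms separately. The first is controlled by \hyperref[cnd:A3p]{(A3)$'$} and then by \hyperref[cnd:A2]{(A2)} with $j=i$ (so $d(i,j)=0$ and $\|S_i(t,y)-S_i(t,\bar y)\|\le Le^{\alpha t}V(y,i)$), which bounds it by $\bigl(L^2L_w'/(1-2\alpha/\lambda)\bigr)^{1/2}V(y,i)$, the integral $\int_0^\infty\lambda e^{-(\lambda-2\alpha)t}\,dt$ converging precisely because $1-2\alpha/\lambda>0$; the second is bounded, uniformly in $y$, by a finite constant coming from \hyperref[cnd:A1p]{(A1)$'$} (taking the maximum over the finitely many $i\in I$); and the third is at most $\varepsilon$ since $\nu^\varepsilon$ is supported on $B(0,\varepsilon)$. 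Collecting these gives $\bigl(\int_X V^2(z)\,\Pi_\varepsilon(y,i,dz)\bigr)^{1/2}\le a_0V(y,i)+b$ with $a_0:=\bigl(L^2L_w'/(1-2\alpha/\lambda)\bigr)^{1/2}$ and $b>0$, i.e.\ \hyperref[cnd:B1p]{(B1)$'$}; by \eqref{CLT_condition} one has $a_0\in(0,1)$ (positivity from $L,L_w'>0$), and if $b$ happened to vanish it may be replaced by any positive number.

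\medskip
Finally I would note that $V(y,i)=\|y-\bar y\|$ is genuinely a Lyapunov function on $(X,\varrho_{\tilde c})$: it is continuous, bounded on bounded sets, and $V(y,i)\to\infty$ as $\varrho_{\tilde c}\bigl((y,i),(\bar y,1)\bigr)\to\infty$. Hence, with \hyperref[cnd:B0]{(B0)}--\hyperref[cnd:B5]{(B5)} and \hyperref[cnd:B1p]{(B1)$'$} all in force for $\Pi_\varepsilon$, Theorem~\ref{thm:CTG} applies verbatim and gives the CLT for $(g(Y_n,\xi_n))_{n\in\mathbb{N}_0}$ for every $g\in Lip_b(X)$ whenever $\mu\in\mathcal{M}_{1,1}^V(X)$, as well as the Donsker invariance principle in the stationary case.

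\medskip
The only genuine obstacle is the middle step, and within it two points need care: (i) tracking \emph{where} the semiflow and the density are evaluated, so that \hyperref[cnd:A1p]{(A1)$'$} and \hyperref[cnd:A3p]{(A3)$'$} apply in exactly their stated form; and (ii) recognising that \eqref{CLT_condition} is precisely what makes both $1-2\alpha/\lambda>0$ and $a_0<1$. Using Minkowski's inequality rather than expanding $(a+b+c)^2$ with an auxiliary parameter is what keeps the coefficient of $V^2$ equal to $a_0^2$ and avoids any loss that would destroy the contractivity $a_0<1$.
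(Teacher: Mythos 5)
Your proposal is correct and follows essentially the same route as the paper: reduce everything to Theorem~\ref{thm:CTG}, observe that \eqref{CLT_condition} implies \eqref{eq:balance1} with $L_w=\sqrt{L_w'}$ so that \hyperref[cnd:B0]{(B0)} and \hyperref[cnd:B2]{(B2)}--\hyperref[cnd:B5]{(B5)} come from the proof of \cite[Theorem~4.1]{dawid}, and then verify \hyperref[cnd:B1p]{(B1)$'$} by the Minkowski inequality in $L^2$ of the product measure, controlling the three terms via \hyperref[cnd:A3p]{(A3)$'$} together with \hyperref[cnd:A2]{(A2)} (with $j=i$), via \hyperref[cnd:A1p]{(A1)$'$}, and via the support of $\nu^\varepsilon$, exactly as in the paper, with the same constants $a=\sqrt{\lambda L_w'L^2/(\lambda-2\alpha)}$ and a uniform $b$.
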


\begin{proof}
We shall use Theorem \ref{thm:CTG}. It is easy to check that inequality \eqref{CLT_condition} implies \eqref{eq:balance1} with \hbox{$L_w := \sqrt{L_w'}$}. Hence, as mentioned in the previous section, conditions \hbox{\hyperref[cnd:B1]{(B1)}-\hyperref[cnd:B5]{(B5)}} for \hbox{$\Pi=\Pi_{\varepsilon}$} can be derived from \hyperref[cnd:A1]{(A1)}-\hyperref[cnd:A5]{(A5)} and \eqref{eq:balance1} (cf. the proof of \cite[Theorem~4.1]{dawid}). Obviously, \hyperref[cnd:B0]{(B0)} follows immediately from the continuity of functions $\pi_{i,j}$, $y\mapsto S_i(t,y)$, $y\mapsto p(y,\theta)$ and $w_{\theta}$.  
Thus the proof of Theorem \ref{thm_CLT} reduces now to showing condition \hyperref[cnd:B1p]{(B1)$^{\prime}$}, introduced in Section~\ref{subsec_CLT}.

Let $P_{\varepsilon}$ denote the Markov operator corresponding to $\Pi_{\varepsilon}$, and let $U_{\varepsilon}$ stand for its dual operator. Further, fix an arbitrary $(y,i)\in X$. We then see that
\begin{align}\label{eq:<V2,Pmu>}
\begin{aligned}
U_{\varepsilon} V^2(y,i) 
&=\int_X V^2(z,l)\,\Pi_{\varepsilon}\left(y,i,dz\times dl\right)\\
&
=\int_0^{\infty}\lambda e^{-\lambda t}\int_{\Theta}p(S_i(t,y),\theta)\int_{B(0,\varepsilon)}
\left\|w_{\theta}(S_i(t,y))+h-\bar{y}\right\|^2
\,\nu^{\varepsilon}(dh)\,d\theta \,dt.
\end{aligned}
\end{align}
Now, introduce $Z=[0,\infty)\times \Theta\times H$, and define $\nu\in \mathcal{M}_1(Z)$ as follows:
\begin{align*}
\nu(A)=&\int_0^{\infty}\lambda e^{-\lambda t}\int_{\Theta}p(S_i(t,y),\theta)\int_{B(0,\varepsilon)}\mathbbm{l}_A(t,\theta,h)\,\nu^{\varepsilon}(dh)\,d\theta\,dt,\;\;\; A\in\mathcal{B}_Z.
\end{align*}
Further, consider the space $\mathcal{L}^2(Z,\mathcal{B}_Z,\nu)$,  
and define $\varphi_0:Z\to\mathbb{R}$ by
\begin{align*}
\varphi_0(t,\theta,h)=\left\|w_{\theta}(S_i(t,y))+h-\bar{y}\right\|\quad\text{for}\quad (t,\theta,h)\in Z.
\end{align*}
Note that $\varphi_0\in \mathcal{L}^2(Z,\mathcal{B}_Z,\nu)$. To see this, let us first write
\begin{align*}
\varphi_0(t,\theta,h)\leq \left\|w_{\theta}(S_i(t,y))-w_{\theta}(S_i(t,\bar{y}))\right\|+\left\|w_{\theta}(S_i(t,\bar{y}))-\bar{y}\right\|+\|h\|.
\end{align*}
Then, using the Minkowski inequality, we obtain
\begin{align}\label{eq:CLT_model}
\begin{aligned}
\left(U_{\varepsilon} V^2(y,i) \right)^{1/2}
&= \left(\int_Z\varphi^2_0(t,\theta,h)\,\nu(dt\times d\theta\times dh)\right)^{1/2}\\
&\;\leq \left(\int_{Z}
\left\|w_{\theta}(S_i(t,y))-w_{\theta}(S_i(t,\bar{y}))\right\|^2\nu(dt\times d\theta\times dh)\right)^{1/2}\\
&\;+\left(\int_{Z}\left\|w_{\theta}(S_i(t,\bar{y}))-\bar{y}\right\|^2\nu(dt\times d\theta\times dh)\right)^{1/2}+\varepsilon,
\end{aligned}
\end{align}
where the second component on the right-hand side is finite due to assumption \hyperref[cnd:A1p]{(A1)$^{\prime}$}.
\hbox{According} to conditions \hyperref[cnd:A3p]{(A3)$^{\prime}$} and \hyperref[cnd:A2]{(A2)}, we further have 
\begin{align}
\begin{aligned}\label{eq:CLT_model_estim}
\int_{Z}&
\left\|w_{\theta}(S_i(t,y))-w_{\theta}(S_i(t,\bar{y}))\right\|^2\nu(dt\times d\theta\times dh)\\
&\leq
\int_0^{\infty}\lambda e^{-\lambda t} L_w'\|S_i(t,y)-S_i(t,\bar{y})\|^2\,dt\,
\leq 
\int_0^{\infty}\lambda e^{-\lambda t} L_w'L^2e^{2\alpha t}\|y-\bar{y}\|^2 \,dt\\
&=
\lambda L_w'L^2\|y-\bar{y}\|^2\left(\int_0^{\infty}e^{-(\lambda-2\alpha) t}\,dt\right) 
=
\frac{\lambda L_w^{\prime}L^2}{\lambda-2\alpha}V^2(y,i),
\end{aligned}
\end{align}
where the last equality follows from the fact that $2\alpha<\lambda$, which is ensured by  (\ref{CLT_condition}).  We see that, indeed, $\varphi_0\in \mathcal{L}^2(Z,\mathcal{B}_Z,\nu)$. 
Further, reffering to (\ref{eq:CLT_model}) and (\ref{eq:CLT_model_estim}), we obtain \hbox{condition~\hyperref[cnd:B1p]{(B1)$^{\prime}$}} with 
\begin{gather*}
a:=\sqrt{\frac{\lambda L_w^{\prime}L^2}{\lambda-2\alpha}},\\
b:=\sup_{y\in Y}\left(\int_0^{\infty}e^{-\lambda t}\int_{\Theta}\|{ w_{\theta}(S_i(t,\bar{y}))-\bar{y}}\|^2p(S_i(t,y),\theta)\,d\theta\,dt\right)^{1/2}+\varepsilon^*<\infty.
\end{gather*}
Moreover, due to assumption (\ref{CLT_condition}),  we see that $a\in(0,1)$, which completes the proof.
\end{proof}

\section*{Acknowledgements}
Hanna Wojew\'odka is supported by the Foundation for Polish Science (FNP). Part of this work was done when Hanna Wojew\'odka attended a four-week study trip to the Mathematical Institute at Leiden University, which was also supported by the FNP (the so-called "Outgoing Stipend" in the START programme).

\bibliography{references}
\bibliographystyle{plain}
\end{document}